\title{Knotting Minimal Sets}
\newcommand{\mS}{{\mathbb S}}
\newcommand{\R}{{\mathbb R}}
\newtheorem{thm}{thm}[section]
\newtheorem{theorem}[thm]{Theorem}
\newtheorem{cor}[thm]{Corollary}
\newtheorem{defn}[thm]{Definition}
\newtheorem{ex}[thm]{Example}
\newtheorem{lemma}[thm]{Lemma}
\newtheorem{prop}[thm]{Proposition}
\newtheorem{remark}[thm]{Remark}
\newtheorem{aside}[thm]{Aside}
\newcommand{\Z}{\mathbb{Z}}
\newcommand{\eX}{{\underleftarrow\lim(
X_i,f_i,p_i)}}
\newcommand{\eY}{{\underleftarrow\lim(Y_i,g_i,q_i)}}
\author{Alex Clark}
\address[Alex Clark]{Centre for Complex Systems, Queen Mary University of London, London, E1 4NS, UK}
\email{alex.clark@qmul.ac.uk}
\author{John Hunton}
\address[John Hunton]{Department of Mathematical Sciences, Durham University, Upper Mountjoy Campus, Stockton Road, Durham, DH1 3LE, UK}
\email{john.hunton@durham.ac.uk}
\date{\today}
\begin{document}

\begin{abstract}
We consider the ways minimal sets of flows in $S^3$ may be embedded. We prove that given any $C^2$ flow on $S^3$ with positive entropy, there is an uncountable collection $\mathcal{M}$ of topologically distinct minimal sets  such that for each $M\in \mathcal{M}$ there are infinitely many embedded copies of $M$ in the flow, each copy with a distinct knot type, thus extending work of Franks \& Williams for periodic orbits.
\end{abstract}

\maketitle

\addtocontents{toc}{\protect\thispagestyle{empty}}
\tableofcontents
\thispagestyle{empty}

\setcounter{page}{1}

\section{Introduction}

In this article we consider the embeddings of metric, compact, one-dimensional minimal sets of flows in the 3-sphere $S^3$. 

Let us recall some basic notions behind this statement. A \emph{flow} is a continuous action of $(\mathbb{R},+)$, and a \emph{minimal set}  $M$ of a flow  is a closed invariant set which contains no proper, non-empty, closed invariant subset. Equivalently, $M$ is minimal if the flow orbit of each point of $M$ is dense in $M$. We shall hereafter refer to any such set simply as a \emph{minimal set} for brevity. 

The simplest one-dimensional minimal sets are periodic orbits, i.e., homeomorphic images of the circle $S^1$, whose embeddings in $S^3$ are covered by knot theory. The study of the various knots that occur as periodic orbits of flows has been an area of significant study by many authors, see, for example, \cite{BW1},\cite{BW2},\cite{FW},\cite{G}. However, minimal sets in general typically have a significantly richer structure, with all but the periodic examples being locally the product of a Cantor set with a one dimensional arc. Indeed, a minimal set is connected, but apart from the periodic orbits, one-dimensional minimal sets are not path connected. By fundamental dimension theory, any one-dimensional compact metric space such as our minimal sets can be embedded in $S^3$. At the same time, for any embedded minimal set $M \hookrightarrow S^3$, as with any one-dimensional compact space, $S^3 \setminus M$ is not separated, and hence is connected and (locally) path connected. Our study here is to engage with the study of the possible embeddings of such examples in $S^3$, both as minimal sets as flows, and more generally as just embeddings as subspaces. 

One class of  minimal sets are the solenoids, which have been studied intensively in the way they can occur as attractors in $S^3$ and other compact 3 manifolds. See, for example \cite{BSG}, \cite{JNW} and the references therein. While there is some relation between that work and the current paper, we address here rather different issues, considering general minimal sets and their embeddings.

While the typical minimal set is far from a simple knotted circle, some ideas from knot theory can usefully be adapted. Firstly, given any space $X$, two embeddings $e_{1,2}\colon X \hookrightarrow S^3$ will be considered \emph{equivalent} if there is an orientation-preserving homeomorphism $h\colon S^3 \to S^3$ satisfying $h\circ e_1 =e_2$. Such an $h$ is isotopic to the identity map on $S^3$, and so there is an ``ambient isotopy'' of equivalent embeddings. The goal of classical knot theory is to study knots, embeddings of $S^1$ in $S^3$, up to ambient isotopy; we shall consider the possible embeddings of minimal sets up to this notion of equivalence. Note that we are interested in the global embedding, or knottedness, of the entire minimal set in $S^3$, and not with issues concerning the linkages of individual orbits, as studied, for example, in \cite{GST} for solenoids; that is a rather different matter altogether.

Our work utilises the recent results of the authors \cite{CHcom} which gave a complete invariant of oriented minimal sets up to homeomorphism. This is clearly much weaker than embeddings of them up to equivalence, but the feature of \cite{CHcom} that we utilise is a presentation of a minimal set in terms of inverse limits of one point unions of circles. We extend this in section \ref{setup} to give a notion of a \emph{presentation} of a minimal set in $S^3$ in terms of a nested sequence of 3-dimensional handlebodies in $S^3$. This has a parallel with the work of Barge and S\'anchez-Gabites \cite{BSG} who represent their solenoidal attractors as limits of solid tori. Moreover, our presentations also allow a natural notion of when a minimal set in $S^3$ is \emph{unknotted}.

In section \ref{knotgroup} we consider the \emph{knot group} of an embedded minimal set, namely the fundamental group of the complement of the set,  a concept directly analogous to the corresponding idea in traditional knot theory, and used in \cite{BSG} in the case of their solenoids. We prove an analogue of the classical result, that the knot group of a knotted $S^1$ in $S^3$ is $\Z$ if and only if it is the unknot. In our case, we have Theorem \ref{unknotfreelimit} that characterises unknotted minimal sets in terms of direct systems of free groups. Also as in the classical theory, the homology of the complement carries more limited information: for a knot, an embedded copy of $S^1$ in $S^3$, the homology of its complement is always $\Z$. We show, Theorem \ref{AlexanderDuality}, that the homology of the complement of a minimal set $M$ is isomorphic to the Cech cohomology of $M$ itself, so it sees the minimal set, but not the embedding.

In section \ref{Suspensions} we recall the important example of $\Sigma(n)$, the suspension of the full shift on $n$ letters, and prove

\medskip\noindent{\bf Theorem \ref{density}.} \emph{Let $M$ be any minimal set of $\Sigma(n)$. The union of all minimal sets of $\Sigma(n)$ homeomorphic with $M$ is a dense subset of $\Sigma(n)$.}

\medskip \noindent
This will be key to our main result (section \ref{templates})

\begin{theorem}\label{ambitious}
Given any $C^2$ flow on $S^3$ with positive entropy on a compact invariant set, there is an uncountable collection $\mathcal{M}$ of topologically distinct minimal sets such that for each $M\in \mathcal{M}$ there are infinitely many embedded copies of $M$ in the flow, each copy with a distinct knot type, i.e., inequivalent up to ambient isotopy.
\end{theorem}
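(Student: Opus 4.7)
The strategy is to reduce the general statement to an analysis within the suspended full shift $\Sigma(n)$. By a theorem of Katok, any $C^2$ flow on a 3-manifold with positive topological entropy on a compact invariant set contains a horseshoe whose suspension is topologically conjugate as a flow to $\Sigma(n)$ for some $n\geq 2$. By the template theory of Birman and Williams, this copy of $\Sigma(n)$ in $S^3$ sits on a template $T\subset S^3$, and the Franks-Williams theorem that I am extending furnishes an infinite sequence of periodic orbits $\gamma_1,\gamma_2,\ldots$ in $T$ of pairwise distinct classical knot types.

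Next, I would produce the uncountable family $\mathcal{M}$. Using Toeplitz sequences parametrised by distinct supernatural numbers, or substitution-based subshifts of varying complexities, one exhibits uncountably many pairwise non-homeomorphic one-dimensional minimal sets of the shift on $n$ letters and hence of $\Sigma(n)$. Pairwise non-homeomorphism is detected by the inverse-limit invariants of \cite{CHcom}.

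For the main construction, fix $M\in\mathcal{M}$. By Theorem \ref{density}, the union of the topological copies of $M$ inside $\Sigma(n)$ is dense, so for each $k$ I can locate a copy $M_k$ of $M$ lying entirely in an arbitrarily thin knotted tubular neighbourhood $N_k$ of $\gamma_k$. Using the nested solid-surface presentation of Section \ref{setup}, the presentation of $M_k$ can be refined so that the outermost handlebody is $N_k$ and each successive handlebody wraps once along the core $\gamma_k$, ensuring that $M_k$ is not contained in any unknotted sub-handlebody of $N_k$.

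The main obstacle, and the technical heart of the proof, is showing that the embeddings $M_k\hookrightarrow S^3$ are pairwise inequivalent as $k$ varies. The plan is to exploit the knot group of Section \ref{knotgroup}: the inclusion $S^3\setminus N_k \hookrightarrow S^3\setminus M_k$ induces a homomorphism from the classical knot group $\pi_1(S^3\setminus\gamma_k)$ into $\pi_1(S^3\setminus M_k)$, and the non-contractibility of $M_k$ in any unknotted sub-handlebody of $N_k$, together with Theorem \ref{unknotfreelimit}, should force this homomorphism to be injective on the longitudinal information that detects the Alexander polynomial of $\gamma_k$. An ambient homeomorphism of $S^3$ carrying $M_k$ onto $M_j$ would induce an isomorphism $\pi_1(S^3\setminus M_k)\cong\pi_1(S^3\setminus M_j)$, hence an isomorphism of the distinguished subgroups, contradicting the fact that $\gamma_k$ and $\gamma_j$ have distinct classical knot types. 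Running this over all $M\in\mathcal{M}$ yields the theorem; I anticipate that making the injectivity step on knot groups rigorous, using the handlebody presentation and a Mayer-Vietoris argument along the inverse limit, will be the most technical part.
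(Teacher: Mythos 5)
There is a genuine gap at the central step of your construction. You invoke Theorem \ref{density} to ``locate a copy $M_k$ of $M$ lying entirely in an arbitrarily thin knotted tubular neighbourhood $N_k$ of $\gamma_k$,'' but that theorem only says the \emph{union} of the copies of $M$ is dense in $\Sigma(n)$, i.e.\ every open set \emph{meets} some copy; it gives no copy \emph{contained} in a prescribed small neighbourhood. In fact no such copy can exist: the full shift is expansive, so a periodic orbit $\gamma_k$ is an isolated invariant set of the suspension flow, and for a sufficiently thin tube $N_k$ the maximal invariant subset of $\Sigma(n)$ contained in $N_k$ is $\gamma_k$ itself. Since $M_k$ would be a flow-invariant set homeomorphic to the aperiodic set $M$, it cannot sit inside a thin tubular neighbourhood of $\gamma_k$, and with it the whole mechanism of transferring the classical knot types of the Franks--Williams orbits $\gamma_k$ to the copies of $M$ collapses. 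The subsequent step you flag as ``the technical heart'' (injectivity of the longitudinal information under $\pi_1(S^3\setminus N_k)\to\pi_1(S^3\setminus M_k)$) is also only a hope as written, and it is precisely the part that needs an actual argument.

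For comparison, the paper does not localise copies of $M$ near knotted periodic orbits at all. It takes $\mathcal{M}$ to be the Sturmian minimal sets (uncountably many homeomorphism types by Fokkink's classification), realises each copy $\sigma_w(M)$ produced by Theorem \ref{density} on the Lorenz template guaranteed by \cite{FW}, and observes that these copies have (bounded, oriented) surface expansions inside the template. The results of Section \ref{surfaceExp} (Corollaries \ref{isocor} and \ref{isorem}) then show the complement of each copy is homeomorphic to the complement of its \emph{first} template stage, a thickened wedge of two circles; knotting of the whole minimal set is thereby reduced to a one-stage object. Finally, the Franks--Williams estimate bounding Seifert genus below by a polynomial in the template crossing numbers is applied to the $0$-loop of that first stage: as the defining word $w$ lengthens, these loops realise infinitely many distinct knot types, giving infinitely many inequivalent embeddings of each $M$. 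If you want to salvage your route, you would need either to keep the copies spread over the template and control their embeddings as the paper does, or to replace Theorem \ref{density} by a genuinely local statement (which, by the expansivity argument above, cannot hold in the form you use). Your choice of Toeplitz or substitution families for $\mathcal{M}$ is not wrong in principle, but note that the paper's Sturmians are also chosen because they admit the bounded oriented surface expansions on which the knot-type analysis depends.
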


This can be seen as an extension to general minimal sets of the work of Franks and Williams \cite{FW} on periodic orbits. Any given $M \in \mathcal{M}$ can be seen to play the role that periodic orbits do in their work.

The second ingredient for our main result is an analysis of a special class of minimal set embeddings, those with \emph{surface expansions}, section \ref{surfaceExp}. Examples considered in section \ref{excon} illustrate that the computation of the knot group of a general minimal set is formidable, but when an embedded minimal set has a surface expansion, analysis is much more tractable.

Our final ingredient is the notion of \emph{template} as introduced by Birman and Williams \cite{BW1}, \cite{BW2} as a way of describing invariant sets in $S^3$. We recall this in section \ref{templates} where we prove Theorem \ref{ambitious}.

While the main Theorem \ref{ambitious} is clearly in the context of minimal sets of flows, the work on the knot group, Sections \ref{knotgroup} and \ref{excon}, and on surface expansions, Section \ref{surfaceExp}, are applicable to any embedded minimal set in $S^3$ irrespective of whether it is the minimal set of any particular flow.

Finally, let us note that minimal sets play a crucial role in the study of flows and were vital to the initial solution to the Seifert conjecture \cite{S}. The only minimal sets of the $C^1$ non-singular flows on $S^3$ that Schweitzer discovered are isolated, aperiodic, one-dimensional minimal sets. Subsequently, Handel \cite{H} showed that any isolated one-dimensional minimal set of a $C^1$ flow on $S^3$ must be homeomorphic to a minimal set of a flow on a surface, a \emph{surface minimal set} for brevity. Our work allows us to note the extension of this, Remark \ref{Handel}, that any surface minimal set occurs unknotted as an isolated minimal set in a non-singular $C^1$ flow on $S^3$.
Thus, we have a dichotomy between the flows with isolated one-dimensional minimal sets (with zero entropy) in which the known examples occur unknotted, and the positive entropy smooth flows in which the minimal sets occur inside an incredibly complex invariant set resulting from a horseshoe and in which an infinite variety of knotting occurs. It should be noted that the minimal set discovered by Kuperberg \cite{K} has zero entropy, as determined in \cite{HR}.

\section{Flow expansions, presentations of minimal sets and the notion of unknottedness}\label{setup}

Our starting point in the study of embedded minimal sets $M$ is a result of \cite{CHcom}, Section 3.2, on the presentation of such a space, up to homeomorphism, in terms of inverse limits of one point unions, or \emph{wedges}, of circles. In a wedge of circles, we refer to the point where all circles meet as the \emph{wedge point}.

\begin{theorem}
(\cite{CHcom}) Any one-dimensional minimal set $M$ admits (after a change of time) a presentation as an inverse limit of wedges of oriented circles
\[
M \cong \underleftarrow{\lim}\left(X_1 \xleftarrow{f_1} X_2 \xleftarrow{f_2} X_3 \xleftarrow{f_3} \cdots \right)
\]
in which the bonding maps $f_i\colon X_{i+1} \to X_i$ are positive maps preserving the wedge points and the projections $p_i\colon M \to X_i$ are factor maps.
\end{theorem}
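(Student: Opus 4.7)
The plan is to build the approximating wedges $X_i$ and projections $p_i$ from a nested sequence of Kakutani--Rokhlin style partitions of a transverse Cantor section to $M$. First I would dispose of the periodic case by taking $X_i = S^1$ with identity bonding maps and identity projection. For the aperiodic case, combining one-dimensionality with minimality produces a transverse Cantor section $T \subset M$ equipped with a continuous first-return map $\sigma\colon T \to T$ and a positive, continuous return-time function $\tau\colon T \to \mathbb{R}_{>0}$; one can arrange $T$ so that the whole of $M$ is recovered as the orbit closure of $T$ within finite flow time.

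Next I would build a refining sequence of finite clopen partitions $\mathcal{P}_i = \{P_{i,1},\dots,P_{i,n_i}\}$ of $T$, with mesh tending to zero and each $\mathcal{P}_{i+1}$ refining $\mathcal{P}_i$. To obtain the combinatorial structure needed for a wedge of circles, I would arrange each $P_{i,j}$ so that its flow orbits first re-hit $T$ entirely inside a single element of $\mathcal{P}_i$, giving a well-defined symbolic return at level $i$. The $j$-th circle of $X_i$ is then modelled on a representative flow arc starting in $P_{i,j}$ and ending at its first return to $T$, with the whole of $T$ collapsed to the wedge point $x_i$. After a continuous time change making the return-time constant on each $P_{i,j}$, the $\mathbb{R}$-action descends to a partial flow on $X_i$ of which the flow on $M$ is a factor, and this gives the quotient map $p_i\colon M \to X_i$.

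The bonding map $f_i\colon X_{i+1}\to X_i$ is induced by the refinement $\mathcal{P}_{i+1}\preceq\mathcal{P}_i$: a circle of $X_{i+1}$ coming from some $P_{i+1,k}\subset P_{i,j}$ is mapped by tracing the itinerary through the elements of $\mathcal{P}_i$ of its representative flow segment from $P_{i+1,k}$ back to $T$, wrapping around the corresponding circles of $X_i$ in order. By construction $f_i$ sends $x_{i+1}$ to $x_i$ and factors the partial flows, so it is positive, and the $p_i$ are compatible with the $f_i$. To conclude I would check that $M\cong\underleftarrow{\lim}(X_i,f_i)$: the map $x\mapsto(p_i(x))$ is continuous and surjective by compactness, and it is injective because any two distinct points of $M$ are separated by flow boxes obtained from a sufficiently fine $\mathcal{P}_i$.

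The hard part will be simultaneously arranging (i) refining clopen partitions compatible with the return dynamics so that the quotient spaces are genuinely one-point unions of circles rather than more general graphs, and (ii) the continuous time change that makes the return-time constant on each partition element while remaining locally uniformly small, so that the factorisations $p_i$ fit coherently together as $i\to\infty$ and the time change on $M$ is itself continuous in the limit. This combinatorial-dynamical bookkeeping is the technical heart of the argument and is what the construction in \cite{CHcom} supplies.
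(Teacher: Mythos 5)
Your overall strategy (a transverse Cantor section, first-return map, Kakutani--Rokhlin style towers, collapse to wedges of circles, time change to make return times constant) is the right kind of construction, and the periodic case is disposed of correctly. But as written there is a genuine gap in the aperiodic case: you fix a single section $T$ once and for all and collapse \emph{all} of $T$ to the wedge point $x_i$ at every level $i$. Then $p_i^{-1}(x_i)\supseteq T$ for every $i$, so any two points of $T$ have the same image in each $X_i$ and hence in $\underleftarrow{\lim}\left(X_i,f_i\right)$; the natural map $M\to \underleftarrow{\lim} X_i$ factors through the quotient $M/T$, which is not homeomorphic to $M$ since $T$ is a Cantor set. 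Your injectivity argument (``separated by flow boxes from a sufficiently fine $\mathcal{P}_i$'') fails precisely on $T$. There is also an internal inconsistency: with one fixed $T$, the first-return segment of any $P_{i+1,k}$ meets $T$ only at its endpoints, so each circle of $X_{i+1}$ would wrap exactly once around a single circle of $X_i$; the ``itinerary'' description you give for $f_i$, with circles of $X_{i+1}$ wrapping around several circles of $X_i$, only makes sense if the section itself changes with $i$.

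The repair, and what a construction of this type actually requires, is a strictly nested sequence of clopen sections $T_1\supset T_2\supset\cdots$ with $\bigcap_i T_i$ a single point (equivalently, diameters tending to zero), with the level-$(i+1)$ towers obtained by stacking level-$i$ towers over the smaller base $T_{i+1}$ --- the flow analogue of Kakutani--Rokhlin refinement in Bratteli--Vershik models. Then $p_i^{-1}(x_i)=T_i$, only the single point of $\bigcap_i T_i$ is collapsed at every stage, your separation argument does yield injectivity, and the bonding maps become exactly the itinerary maps you describe, taking wedge point to wedge point and factoring the partial flows, i.e.\ positive maps. The coherent time changes across levels that you flag remain genuine (but routine) bookkeeping. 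Note finally that the paper does not reprove this statement; it quotes it from \cite{CHcom}, where the construction is of the nested-section type just indicated, so your proposal needs the above correction before it matches that argument.
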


In this statement the wedges of circles $X_i$ admit \emph{partial} flows which are factors (after a possible time change) of the flow on the underlying minimal set, and the bonding maps $X_{i+1}\to X_i$ are covering maps away from the wedge point and factor the partial flows.  
We refer the reader to \cite{CHcom} for a detailed treatment. 

The number of circles $n_i$ used to form $X_i$ is finite but may not be bounded. We refer to any presentation as above as a \emph{flow expansion}. 

The reader will note a classical analogue and example of this in the presentation of the one dimensional solenoids. For the solenoids we only need each $X_i$ to be a single circle. In their standard presentations the bonding maps are covering maps which factor the periodic flows on the circles, and which also are factors of the natural flow on the underlying solenoid. Presentations using only single circles will however not go beyond the solenoid examples.

Note that the data in such an inverse system, namely the number of circles in each wedge sum $X_i$ and the bonding maps $f_i\colon X_{i+1}\to X_i$, only determines the homeomorphism type of $M$, not the ambient isotopy class of any specific embedding. We now address the issue of incorporating this further information. We begin with a very general definition.

\begin{defn}
    A \emph{neighbourhood presentation} of a minimal set $M\subset S^3$ is a sequence of nested closed subspaces $S_i\subset S^3$
\[
S^3 \hookleftarrow S_1 \xhookleftarrow{e_1} S_2\xhookleftarrow{e_2} S_3  \cdots
\]
in which each $S_i$ has non-empty interior, $S_{i+1}$ is embedded within the interior of $S_i$ and $M=\cap_i S_i$. 
\end{defn}

Such neighbourhood presentations certainly exist: given an embedded minimal set $M\hookrightarrow S^3$ together with a flow expansion $\eX$ 
for $M$, extend each of the projections $p_i \colon M \to X_i$ to a neighbourhood of $M$ in $S^3$ (recall that a wedge of circles is an absolute neighbourhood retract (ANR)). This allows one to identify a nested sequence of regular neighbourhoods $S_i$ associated to the $X_i$ with $\cap_i S_i$ the embedded copy of $M$ and with the projection $p_i$ extending to $S_i$. However, in the general case there is no reason to suppose much about the structure of the $S_i$'s. As in the case of knots, in order to avoid wild behaviour one needs to require some form of smoothness and in our setting it is natural to consider embeddings of $M$ within non-singular $C^1$ flows on $S^3$, in which case the  neighbourhoods $S_i$ can be taken to be handlebodies, as described below.

Recall that a \emph{handlebody} is a closed, oriented, regular neighbourhood of a finite, connected, oriented graph in $S^3$. Equivalently, it is a closed 3-manifold constructed by gluing a finite number of `handles' (spaces homeomorphic to the product of a closed 2-disc and an interval, $D^2\times I$) by their ends $D^2\times\{0,1\}$ to the boundary of a closed 3-ball, $D^3$. The positioning of the ends of the handles on the 3-ball does not affect its homeomorphism class, and the isotopy class of $S\subset S^3$ is unchanged as we move these positions around. The boundary of a handlebody is a closed surface, and the \emph{genus} of a handlebody is the genus of its surface boundary. As all our handlebodies and associated graphs from now on will be oriented, we shall drop mentioning this from the descriptions.

\begin{defn}
    A \emph{handlebody presentation} of a minimal set $M\subset S^3$ is a neighbourhood presentation $\{S_i\subset S^3\}$ in which each $S_i$ is a handlebody and if $q_i\colon S_i\to G_i$ is the projection of $S_i$, the regular neighbourhood of a finite connected graph $G_i$, to its underlying graph, then the  map $G_{i+1}\to G_i$ induced by the inclusion $S_{i+1}\subset S_i$ preserves orientation and is onto. 
    
    If $\eX$ is a flow expansion for $M$, an \emph{$\{X_i\}$-handlebody presentation} is a handlebody presentation for which there is a commutative diagram
\[
\xymatrix{
\cdots \,\ar@{^{(}->}[r] & \ar[d]\ar@{^{(}->}[r] S_{i}
&\cdots \ar@{^{(}->}[r]
& S_{3}\ar[d]\ar@{^{(}->}[r]_{e_{2}}
& S_{2}\ar@{^{(}->}[r]_{e_{1}}\ar[d]
& S_{1}
\ar[d]
\\
\cdots\ar@{->>}[r]
& X_i\ar@{->>}[r]
&\cdots\ar@{->>}[r]
& X_{3}\ar@{->>}[r]_{\;\;f_{2}}
& X_{2}\ar@{->>}[r]_{\;\;f_{1}}
&X_{1}}
\]
\noindent where the vertical arrows are deformation retractions. 
\end{defn}

As any handlebody $S\subset S^3$ is isotopic to a regular neighbourhood of a wedge of (possibly knotted) circles, any handlebody presentation of a minimal set $M\subset S^3$ is in fact an $\{X_i\}$-handlebody presentation for some flow expansion $\eX$.

In the classical study of knots, there is a standard, trivial, embedding of $S^1$, for example as the unit circle in the $xy$ plane in $\R^3\subset S^3$, which, along with all its equivalent embeddings, is regarded as the \emph{unknot}. Remarkably, given the complexity and flexible nature of a general minimal set $M$, there is an analogue for an \emph{unknotted} embedding of $M$, and we shall see that every such $M$ has such an unknotted embedding. The construction of this embedding is a natural generalisation of the standard embedding of a dyadic solenoid as the intersection of a nested sequence of unknotted solid tori, each one wrapping twice around the torus from the previous stage. This embedding is mirrored by the standard presentation of the dyadic solenoid as the limit of an inverse sequence of circles with bonding maps given by the doubling map of the circle. 

\begin{defn}\label{HBunk}
    Say that a handlebody $S\subset S^3$ is \emph{unknotted} if its embedding is isotopic to the regular neighbourhood of a finite connected graph lying in the $xy$ plane.
\end{defn}

\begin{remark}\label{unkS}{\em 
Note that if the handlebody $S\subset S^3$ is unknotted, then each handle must itself be unknotted (in the classical sense of the unknottedness of the embedded $S^1$ given by the central line $\{{\bf 0}\}\times [0,1]\subset D^2\times [0,1]$, with ends joined by a simple arc along the surface of the 3-ball). In fact, this is sufficient for $S$ to be unknotted: if each handle is unknotted, then any `braids' different handles make with each other may be unwound by simply moving their junctions with the central 3-ball around its surface.
}\end{remark}

It can be convenient to have a `standard model' of an unknotted genus $n$ handlebody in $S^3$. We take this as a regular neighbourhood of the rose of $n$ loops laid out in the $xy$ plane, with wedge point the origin.

\begin{defn}
    Say that a minimal set $M\subset S^3$ is \emph{unknotted} if it has a handlebody expansion $M=\cap S_i$ in which each $S_i$ is unknotted in $S^3$.
\end{defn}

\begin{theorem}
    For every minimal set $M$ there is a homeomorphic copy of $M$ in $S^3$ that is unknotted.
\end{theorem}

\begin{proof}
Given $M$ we can realise its homeomorphism class via a flow expansion $M=\eX$. Begin with the first wedge of circles $X_1$. Embed a regular neighbourhood of $X_1$, say $S_1$, in $S^3$ as an unknotted handlebody according to our `standard model', as above. For each $S^1$ in $X_2$ we may embed a based loop inside $S_1$ so that it is unknotted in $S^3$: this can be done in the same way as the induction step in constructing the standard embedding of one stage of a solenoid in the previous one. Repeat this for each wedge summand of $X_2$ (avoiding the image of previous loops) and take a (small) regular neighbourhood of the resulting embedding of $X_2$ as our $S_2$. As each image loop is unknotted, $S_2$ is unknotted in $S^3$ by Remark \ref{unkS}. The process now iterates to give an unknotted $\{X_i\}$-handlebody presentation of $M$ in $S^3$.
\end{proof}

\begin{prop}\label{unknotpair}
    Suppose $S_{i+1}\subset S_i\subset S^3$ is a nested pair of handlebodies in $S^3$. If $S_{i}$ is knotted, then so is $S_{i+1}$.
\end{prop}

\begin{proof}
    Suppose $S_i$ is knotted. By the Remark \ref{unkS} we know that at least one of its handles, $k$ say, is knotted. By the definition of handlebody presentation, we know there is at least one handle of $S_{i+1}$, say $l$, that passes around $k$. Consider the knot $L$ in $S_{i+1}$ consisting of the handle $l$, joining the ends of the handle as in Remark \ref{unkS}. In turn, consider $S_{i+1}\subset T$, where $T$ is the genus 1 handlebody formed from the union of $S_i$ with a large 3-ball that includes all of $S_i$ except for the handle $k$. Then $T$ is knotted since $k$ is, and $L$ is a satellite knot of $T$, its companion. Hence $L$, and so the handle $l$, is knotted, for example by \cite{Marc}. Thus $S_{i+1}$  is knotted.
\end{proof}

We can use this to see that the notion of a minimal set $M$ in $S^3$ being unknotted does not depend on finding a special (unknotted) presentation.

\begin{cor}\label{indep}
    If $M\subset S^3$ is unknotted, and so has a handlebody presentation $M=\cap S_i$ in which each $S_i$ is unknotted, then every handlebody presentation of $M\subset S^3$ consists of unknotted handlebodies. 
\end{cor}
To prove this we shall recall a result from \cite{CHcom}.

\begin{theorem}\label{unpointed}(\cite{CHcom} Theorem 5.3)
Given any two flow expansions $\eX$ and $\eY$ of the same minimal set $M$, there is a commutative diagram as follows in which the maps $d_i$ and $u_i$ preserve the direction of the flow (but in general may not take wedge point to wedge point).
\[
\begin{tikzcd}[font=\large] 
X_{m_1}  \arrow[swap]{d}{d_1} && X_{m_2}\arrow[swap]{d}{d_2}\arrow{ll}{f_{m_1,\,m_2}}&&X_{m_3}\arrow{ll}{f_{m_2,\,m_3}}\arrow[swap]{d}{d_3} &&  \cdots \arrow{ll}  \\ Y_{n_1} && Y_{n_2} \arrow{ll}{g_{n_1,\,n_2}}\arrow{llu}{u_1} &&Y_{n_3} \arrow{ll}\arrow{ll}{g_{n_2,\,n_3}}\arrow{llu}{u_2}  &&\cdots \arrow{ll}   
\end{tikzcd}
\]
\end{theorem}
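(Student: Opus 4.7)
The plan is to build the interleaved diagram inductively via a cofinality argument relating the two flow expansions of $M$. The central technical step I would need is the following factorization lemma: given a flow expansion $\underleftarrow{\lim}(X_i,f_i,p_i)$ of $M$ and any factor map $\phi\colon M\to K$ of the flow onto a wedge of circles $K$, there exist an index $i$ and a flow-preserving continuous map $h\colon X_i\to K$ (not required to take wedge point to wedge point) such that $h\circ p_i=\phi$.

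To establish this lemma, I would work with the equivalence relation $\sim_i$ on $M$ defined by $x\sim_i y\iff p_i(x)=p_i(y)$. Each $\sim_i$ is closed, $\sim_{i+1}$ refines $\sim_i$, and $\bigcap_i\sim_i$ is the diagonal of $M\times M$; the equivalence relation $\sim_\phi$ induced by $\phi$ is likewise closed. Using compactness of $M$ together with the fact that both $\phi$ and each $p_i$ are factor maps of the same flow (so their induced partitions are unions of flow arcs of bounded combinatorial complexity), a standard refinement argument shows that $\sim_i$ refines $\sim_\phi$ for all sufficiently large $i$, yielding a unique continuous $h\colon X_i\to K$ with $h\circ p_i=\phi$; the map $h$ inherits the flow-direction property from $\phi$ and $p_i$. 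With the lemma in hand, the interleaving is built by alternation: after a reindexing, set $m_1=1$, and inductively apply the lemma to $p_{m_i}\colon M\to X_{m_i}$ against the system $\eY$ to obtain an index $n_{i+1}$ and a map $u_i\colon Y_{n_{i+1}}\to X_{m_i}$ with $u_i\circ q_{n_{i+1}}=p_{m_i}$, then apply it to $q_{n_{i+1}}\colon M\to Y_{n_{i+1}}$ against the system $\eX$ to obtain $m_{i+1}>m_i$ and $d_{i+1}\colon X_{m_{i+1}}\to Y_{n_{i+1}}$ with $d_{i+1}\circ p_{m_{i+1}}=q_{n_{i+1}}$. Commutativity of each square then follows from uniqueness of the factorizations together with the surjectivity of the projections from $M$: for example $u_i\circ d_{i+1}$ and the bonding map $f_{m_i,m_{i+1}}$ both yield $p_{m_i}$ when precomposed with $p_{m_{i+1}}$, and surjectivity of $p_{m_{i+1}}$ forces them to agree; the analogous check handles $d_i\circ u_i$ versus $g_{n_i,n_{i+1}}$.

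The main obstacle is the factorization lemma itself. For arbitrary inverse limits of one-dimensional polyhedra, strict (as opposed to merely homotopical) factorization through a finite stage is the exception rather than the rule, so one must exploit the rigidity that the flow provides: because both systems arise as flow expansions of the same minimal set, the partitions of $M$ coming from their stages are compatible with the single underlying flow, and this forces genuine refinement rather than mere approximation. A secondary bookkeeping point, consistent with the statement, is that the $h$ produced by the lemma need not be a positive map---it respects the partial flow on $K$ but may send the wedge point of $X_i$ into the interior of an edge of $K$---which is precisely why the $d_i$ and $u_i$ are only required to preserve flow direction.
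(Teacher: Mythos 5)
First, a point of reference: the paper does not prove this statement at all; it is quoted from \cite{CHcom} (Theorem 5.3). So your proposal can only be judged against what a correct argument would need, not against an in-text proof.

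The skeleton of your argument---alternating applications of a factorization lemma, then getting commutativity from uniqueness of factorizations and surjectivity of the projections---is the right shape for such an interleaving, but the factorization lemma is precisely the hard point and, as stated, it is false. You require strict equality $h\circ p_i=\phi$, i.e.\ that for all large $i$ every fibre of $p_i$ is contained in a single fibre of $\phi$. Compactness and the shrinking of the $p_i$-fibres only give that small $p_i$-fibres lie \emph{close} to single $\phi$-fibres; since the decomposition of $M$ into $\phi$-fibres is not an open partition, closeness does not upgrade to containment, and there is no ``standard refinement argument'' to invoke. Concretely, take the dyadic solenoid with its standard expansion and projections $p_i$, and let $\phi=p_1\circ h$, where $h$ moves each point along its orbit by a time $\tau(x)$ that depends continuously but not locally constantly on the transversal coordinate; this $\phi$ is a legitimate projection of a flow expansion (after the corresponding change of time, which the definition of flow expansion allows), but its fibres are ``tilted'' transversals, and no $p_i$-fibre of positive diameter is contained in any single one of them, for any $i$. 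The flow rigidity you appeal to cannot rescue this, exactly because the two expansions are factor maps of the flow only up to possibly \emph{different} time changes; moreover your parenthetical justification that the fibre partitions are ``unions of flow arcs of bounded combinatorial complexity'' is incorrect---the fibres of these factor maps are transversal Cantor sets, not flow-saturated sets. A correct proof has to produce maps between the approximants that commute on the nose without arising from exact factorizations of the projections: for instance, factor only up to a small continuous time cocycle and then correct the maps by flowing, using the freedom that $d_i$ and $u_i$ need only preserve the flow direction and may move wedge points (which is exactly the caveat built into the statement). That correction step is the real content of the theorem, and it is the missing idea in your proposal.
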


\smallskip\noindent\emph{Proof of Corollary} \ref{indep}.
    Suppose given two handlebody presentations of $M\subset S^3$, say $\{S_i\}$ and $\{T_j\}$, corresponding to flow expansions $\eX$ and $\eY$. (For this, the flow structure might need to be adjusted for the two expansions as we cannot assume the two flows on $M$ are necessarily conjugate.) By Theorem \ref{unpointed} we have a commutative zig-zag diagram of maps as above. After the necessary telescoping, and possibly shrinking of cross-sections of the neighbourhoods, we obtain the following commutative diagram in which all the maps are inclusions.
  
\[
\begin{tikzcd}[font=\large] 
S_{m_1}  \arrow[swap]{d} && S_{m_2}\arrow[swap]{d}\arrow{ll}&&S_{m_3}\arrow{ll}\arrow[swap]{d} &&  \cdots \arrow{ll}  \\ T_{n_1} && T_{n_2} \arrow{ll}\arrow{llu} &&T_{n_3} \arrow{ll}\arrow{ll}\arrow{llu}  &&\cdots \arrow{ll}   
\end{tikzcd}
\]
Suppose now that the presentation $M=\cap S_i$ is unknotted. Then Proposition \ref{unknotpair} implies that the $T_j$'s must all be unknotted.
\hfill$\square$

\section{The knot group for minimal sets}\label{knotgroup}

One classical invariant for the embedding of knots is the knot group, the fundamental group of the complement of the knot. As we shall see, this group can also play a key role in the study of the knotting of minimal sets.
\begin{defn}
For an embedded minimal set $M\hookrightarrow S^3$, the \emph{knot group} is the fundamental group of $E=S^3\setminus M$ and we denote this group by $G(M).$
\end{defn}

Given that $M$ is one-dimensional and compact, we know that $E$ is an open, path connected manifold. Thus, $G(M)$ is defined up to isomorphism independent of the choice of base point, though the choice of base point may influence the precise representation. By an application of the Seifert - van Kampen theorem, just as with classical knots, this knot group is the same up to isomorphism whether we regard $M$ as embedded in $S^3$ or $\mathbb{R}^3$. 

While the computation of this knot group at first seems quite daunting, it can be managed in much the same way as is done with the complement of Antione's Necklace and similar spaces. Based on a neighbourhood presentation $\{S_i\}$ of $M$ as discussed in the previous section, set $E_i = S^3\setminus S_i$ and then we have that $E=\cup_i E_i$ and 
\[
G(M) = \underrightarrow\lim\;\pi_1(E_i)
\]
\noindent where the homomorphisms in the direct limit are induced by the inclusions $E_i\subset E_{i+1}$.

Our first result about this group is a characterisation of unknotted minimal sets in terms of their knot groups, and is an extension of the classical result that an embedded copy of $S^1$ is unknotted if and only if the fundamental group of its complement is $\Z$.

\begin{theorem}\label{unknotfreelimit}
A minimal set $M=\cap S_i\hookrightarrow S^3$ is unknotted if and only if the direct system of groups $\{\pi_1(E_1)\to\pi_1(E_2)\to\cdots\}$ is a system of finitely generated free groups. Hence if $M\hookrightarrow S^3$ is unknotted then $G(M)$ is the direct limit of finitely generated free groups.
\end{theorem}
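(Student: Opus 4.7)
The forward direction should be routine. Assuming $M$ is unknotted, choose a flow expansion realising an unknotted embedding as in Definition~\ref{embeddingflowexp}, so that each solid surface $S_i$ is an unknotted handlebody of genus $n_i$ in $S^3$. The open complement $E_i$ is homotopy equivalent to the dual handlebody of the standard Heegaard splitting of $S^3$, itself a handlebody of genus $n_i$, so $\pi_1(E_i)$ is free of rank $n_i$. The inclusions $E_i \hookrightarrow E_{i+1}$ induce a direct system of finitely generated free groups, whose colimit is $E(M)$.

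For the converse, suppose each $\pi_1(E_i)$ is finitely generated and free. The tubular-neighbourhood construction from Section~\ref{setup} applied to the given embedding and a flow expansion yields nested solid surfaces $S_i$ with $\bigcap_i S_i = M$, each a handlebody of genus $n_i$ (as a regular neighbourhood of a wedge of $n_i$ circles), together with deformation retractions onto the $X_i$ compatible with the bonding maps $f_i$. I claim it suffices to show that each $S_i$ is unknotted in $S^3$: once this is established, the nested sequence satisfies the hypotheses of Definition~\ref{embeddingflowexp} and hence exhibits $M$ as unknotted in the sense of Definition~\ref{unknotted}.

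To show unknottedness of $S_i$, consider the closed complement $N_i = S^3 \setminus \mathrm{int}(S_i)$, a compact, orientable, irreducible 3-manifold with boundary a closed orientable surface of genus $n_i$, homotopy equivalent to $E_i$ and hence with $\pi_1(N_i)$ finitely generated and free. I would then invoke the classical theorem that a compact, orientable, irreducible 3-manifold with non-empty boundary and finitely generated free fundamental group is a handlebody (provable via the loop theorem and induction on rank). This identifies $N_i$ as a handlebody of genus $n_i$, so $(S_i, N_i)$ is a genus-$n_i$ Heegaard splitting of $S^3$; by Waldhausen's theorem that every Heegaard splitting of $S^3$ is standard, $S_i$ is ambient isotopic to the unknotted handlebody of that genus.

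The main obstacle is precisely this last paragraph: the argument rests on two nontrivial 3-manifold inputs, namely the handlebody recognition theorem for free fundamental group, and Waldhausen's uniqueness of Heegaard splittings of $S^3$. Granted these, the proof is a direct generalisation to minimal sets of the classical knot-group characterisation of the unknot.
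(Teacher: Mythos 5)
Your proposal is correct and follows essentially the same route as the paper's proof: the forward direction is the same elementary computation of the complement of an unknotted genus-$n_i$ handlebody, and the converse uses the same two inputs, namely that a complement with finitely generated free fundamental group is a handlebody (the paper phrases this as $E_i$ being a regular neighbourhood of a finite connected graph) and Waldhausen's uniqueness of Heegaard splittings of $S^3$. Your extra care in spelling out irreducibility of $N_i$ and the reduction to unknotting each $S_i$ individually matches the paper's argument in substance, so no further comment is needed.
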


\begin{proof}
First suppose $M\hookrightarrow S^3$ is unknotted. By definition, the embedded copy of $M$ is $\cap_i S_i$, where each $S_i$ is an unknotted handlebody of genus $n_i$. By an elementary calculation, the complement of any such $S_i$ has free fundamental group of rank $n_i$ and so the first part of the result follows.

Conversely, suppose each $\pi_1(E_i)$ is a finitely generated free group. Then $E_i$ is a regular neighbourhood  of a finite connected graph, and so is also a handlebody. Then $S_i$ and $\overline{E}_i$, the closure of $E_i$, is a Heegaard splitting of $S^3$ of genus the rank of $\pi_1(E_i)$. By Waldhausen's theorem \cite{Wald} any two such splittings of a given genus are isotopic, and hence isotopic to the decomposition of $S^3$ where the handlebody  $S_i$ is unknotted in the sense of Definition \ref{HBunk}.
\end{proof}

Of course this result does not imply that the knot group of an unknotted minimal set is free: for example, the direct limit of copies of $\Z$ under the `times 2' homomorphism $n\mapsto 2n$ has limit $\Z[\frac{1}{2}]$, which is not free. In fact this is precisely what happens in the case of the standard unknotted embedding of the dyadic solenoid. 

When the surfaces $S_i$ are themselves knotted in $S^3$, then the groups $\pi_1(E_i)$ will not be free. Even in the unknotted case, the homomorphisms $\pi_1(E_i)\to \pi_1(E_{i+1})$ arising from the inclusions $E_i\to E_{i+1}$ need not be either surjective, or injective, as we shall see illustrated in examples such as \ref{sol2}  and \ref{TMsimp}  later.

While the knot group $G(M)$ varies considerably with the embedding, the situation is quite different in homology.

\begin{theorem}(Alexander Duality)\label{AlexanderDuality} If $A$ is a closed subset of $S^3$, then (with any coefficients) there is an isomorphism
\[
H_1(S^3\setminus A) \cong \check{H}^1(A),
\]
where $\check{H}$ denotes \v{C}ech cohomology.
\end{theorem}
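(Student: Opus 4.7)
The statement is the \v{C}ech version of Alexander duality specialised to $S^3$: for any compact $A\subset S^n$ one has $\check{H}^{q}(A)\cong \tilde{H}_{n-q-1}(S^n\setminus A)$, and substituting $n=3$, $q=1$ (using $\tilde{H}_1=H_1$, and that closed subsets of the compact space $S^3$ are compact) gives exactly the claim. My plan is therefore to derive this case of Alexander duality in a manner consistent with the neighbourhood-approximation methods already used in the paper, rather than citing the \v{C}ech version as a black box.

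First I would realise $A$ as a nested intersection $A=\bigcap_i N_i$ of compact codimension-zero submanifold neighbourhoods in $S^3$, chosen so that $N_{i+1}\subset\mathrm{int}(N_i)$; such a cofinal system exists by standard regular-neighbourhood theory applied to a sequence of progressively finer triangulations of $S^3$. (In the setting of a minimal set equipped with a flow expansion, the nested tubular neighbourhoods $S_i$ of section \ref{setup} already provide such a system directly.) Writing $F_i=S^3\setminus N_i$, polyhedral Alexander/Lefschetz duality applied to the compact $3$-manifold with boundary $\overline{F_i}$ furnishes natural isomorphisms $H_1(F_i)\cong H^1(N_i)$ for each $i$.

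The remainder is a formal passage to the limit. On the cohomology side, $\varinjlim_i H^1(N_i)=\check{H}^1(A)$ by the standard description of \v{C}ech cohomology of a compact subset of a nice ambient space as the direct limit of singular cohomology over any cofinal family of its open neighbourhoods. On the homology side, $S^3\setminus A=\bigcup_i F_i$ is a directed union of open subsets, and singular homology commutes with directed colimits along inclusions, so $\varinjlim_i H_1(F_i)=H_1(S^3\setminus A)$. Assembling these two identifications with the duality isomorphisms produces the desired $H_1(S^3\setminus A)\cong \check{H}^1(A)$.

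The main (and only non-formal) point is the naturality of these duality isomorphisms with respect to the inclusions $N_{i+1}\hookrightarrow N_i$ and the corresponding complementary inclusions $F_i\hookrightarrow F_{i+1}$; this is needed so that the polyhedral dualities actually assemble into a map between the two direct systems. It is classical, following from the naturality of the cap product against the fundamental class of $S^3$, but it is the single place where care is required. Once this naturality is in hand the limit argument is routine, and the resulting isomorphism holds with arbitrary coefficients, as claimed.
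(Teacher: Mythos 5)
Your argument is correct in substance, but note that the paper does not prove this statement at all: it is invoked as the classical (\v{C}ech) form of Alexander duality, and the only thing the paper subsequently uses beyond the bare isomorphism is its functoriality (the remark after the theorem about the long exact sequence). What you have written out is essentially the standard textbook derivation of that classical theorem: exhaust $A$ by a cofinal nested system of compact polyhedral (regular) neighbourhoods $N_i$ with $N_{i+1}\subset\mathrm{int}(N_i)$, apply duality at each finite stage, and pass to the limit using continuity of \v{C}ech cohomology and compact supports of singular homology. That route is sound, and it has the virtue of dovetailing with the paper's own machinery (the tubular neighbourhoods $S_i$ and complements $E_i$, with $E(M)=\varinjlim\pi_1(E_i)$), and of making transparent the functoriality statement the paper wants, i.e.\ the compatibility of $\check H^1(X_i)\to \check H^1(X_{i+1})$ with $H_1(E_i)\to H_1(E_{i+1})$. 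Two small points of hygiene: the finite-stage isomorphism is cleanest phrased ambiently rather than via Lefschetz duality on $\overline{F_i}$ alone, e.g.\ $H^1(N_i)\cong H_2(S^3,\,S^3\setminus N_i)\cong H_1(S^3\setminus N_i)$, the first map being cap product with the fundamental class (which gives the naturality you flag for free) and the second coming from the long exact sequence of the pair since $H_1(S^3)=H_2(S^3)=0$; and since your $N_i$ are closed neighbourhoods, the identification $\varinjlim_i H^1(N_i)=\check H^1(A)$ should be justified by interleaving with the open system $\{\mathrm{int}(N_i)\}$, which your condition $N_{i+1}\subset\mathrm{int}(N_i)$ provides. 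With those details filled in, your proof is a complete and more self-contained alternative to the paper's citation of the classical result.
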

Thus, we have that $H_1(E)$ is always isomorphic to $\check H^1(M)$ for \emph{any} embedded copy of $M$. Hence, using integer coefficients, the abelianisation of the knot group is always isomorphic with $\check H^1 (M)$, just as the abelianisation of the knot group of a classical knot is always isomorphic to $\mathbb{Z}$.
Similarly, using the functoriality of the terms in the long exact sequence yielding Alexander duality, there is a duality between the maps $\check f_i \colon \check H^1(X_i)\to \check H^1(X_{i+1})$ induced by the bonding maps in a flow expansion and the maps $H_1(E_i)\to H_1(E_{i+1})$ induced by the inclusions in an associated embedding.

\section{Suspensions and density of minimal set homeomorphs}\label{Suspensions}

A transversal or section of a (one-dimensional) minimal set is zero-dimensional. By using the suspension construction, one can recover a minimal set from the return map of the flow to a transversal. Recall that the \emph{suspension} of a homeomorphism $h\colon X \to X$ is the quotient $X\times [0,1]/\sim\,$, where $(x,1)\sim (h(x),0)$, and the \emph{suspension flow} is induced by translation on $[0,1]$. By the results of \cite{AM} there is a homeomorphism from a minimal set $M$ to the suspension of the return map of the flow to any given transversal of the minimal set. If the return map of a minimal set is expansive, then the return map is conjugate to a subshift on a finite alphabet. As we shall see in Section \ref{templates}, suspensions of subshifts are ubiquitous in flows with positive entropy. We will now show that any minimal set in the suspension of a full shift has a dense set of topological copies within the same suspension.

We let $S(n)=\left(\,\mathcal{A}_n^\mathbb{Z},s\right)$ denote the shift on $\mathcal{A}_n=\{0,1,\dots,n-1\}$ and  $\Sigma(n)$ denote the suspension of $S(n)$.

\begin{theorem}\label{density}
Let $M$ be any minimal set of $\Sigma(n)$. The union of all minimal sets of $\Sigma(n)$ homeomorphic with $M$ is a dense subset of $\Sigma(n)$.
\end{theorem}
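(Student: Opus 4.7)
The plan is to reduce the density claim to a symbolic construction in $\mathcal{A}_n^\mathbb{Z}$, and then produce the desired minimal sets via a carefully chosen block substitution. Since $\Sigma(n)$ is the suspension of the full shift $S(n)$, every non-empty open $U \subseteq \Sigma(n)$ contains a flow-translate of a time-zero cylinder $[w] \subseteq \mathcal{A}_n^\mathbb{Z}$ for some finite word $w$. By flow-invariance of minimal sets, it therefore suffices to prove: given a minimal subshift $X \subseteq \mathcal{A}_n^\mathbb{Z}$ with $M \cong \Sigma(X)$, and any word $w \in \mathcal{A}_n^k$, there is a minimal subshift $X' \subseteq \mathcal{A}_n^\mathbb{Z}$ with $w$ a factor of some sequence in $X'$ and $\Sigma(X') \cong M$.

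The periodic case (when $X$ is a finite orbit and $M \cong S^1$) follows from the density of periodic points in the full shift, so I focus on the case that $X$ is a Cantor minimal system with alphabet $\mathcal{A}_X \subseteq \mathcal{A}_n$. For $L \gg k$, I would construct a constant-length substitution $\tau \colon \mathcal{A}_X \to \mathcal{A}_n^L$ satisfying: (a) $\tau$ is injective on letters; (b) for some distinguished $a_0 \in \mathcal{A}_X$, the block $\tau(a_0)$ contains $w$ as a factor; and (c) there is a marker substring, e.g.\ a run $c^{k+1}$ for a fixed letter $c$, occurring in each $\tau(a)$ at exactly one specified position inside the $L$-block and nowhere else, including across block boundaries of concatenations $\tau(a)\tau(a')$. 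Setting $X' = \overline{\bigcup_{i \in \mathbb{Z}} s^i \tau(X)}$, one first checks that $X' = \bigcup_{i=0}^{L-1} s^i \tau(X)$, and thanks to (c) this decomposition is a partition into clopen subsets. The intertwining relation $s^L \tau = \tau s$ then makes $\tau(X)$ a clopen transversal of the suspension flow on $\Sigma(X')$ with return time $L$ and return map conjugate, via $\tau$, to $s|_X$. Minimality of $X$ transfers to $X'$ because the $s$-orbit of any $y = s^r \tau(x) \in X'$ meets $\tau(X)$ along the image of a dense $s$-orbit in $X$. Hence $\Sigma(X')$ is the mapping torus of a map conjugate to $s|_X$, i.e.\ $\Sigma(X) = M$ with time rescaled by $L$, and so $\Sigma(X') \cong M$ as topological spaces. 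Property (b) yields $X' \cap [w] \neq \emptyset$ after a bounded shift of any $\tau(y)$ with $y_0 = a_0$.

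The main obstacle is establishing (c) for arbitrary $w$. The marker must be longer than $|w|$ so it cannot occur inside $w$ itself, and spurious marker occurrences spanning block boundaries have to be ruled out; this can be arranged by requiring the first and last letters of each $\tau(a)$ to differ from the marker symbol and by forbidding unintended long runs in the interior of each block. The small-alphabet edge case $n = 2$ needs a multi-letter marker pattern rather than a single-symbol run, and the factor $w$ may need to be placed in the interior of $\tau(a_0)$ rather than at its boundary, but the overall scheme is unchanged.
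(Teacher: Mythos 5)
Your proposal is correct and follows essentially the same route as the paper: recode the transversal minimal subshift by an injective constant-length substitution whose blocks contain $w$ and carry a marker guaranteeing constant return time, so the suspension of the image is $M$ up to rescaling time, and density follows from the cylinder basis of $\Sigma(n)$. The paper simply makes your properties (a)--(c) explicit by taking $\sigma_w(i)=0^\mu p^i(w)0^\mu$ with $p$ a cyclic permutation of the alphabet and $\mu-1$ the maximal multiplicity of a symbol in $w$, so the run $0^{2\mu}$ between blocks serves as your marker and the cyclic permutation gives injectivity on letters.
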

\begin{proof}
Let $M$ be a given minimal set. With $\mathcal{A}_n^*$ denoting the set of finite words in $\mathcal{A}_n$, let $w$ be any element of $\mathcal{A}_n^*$, and let $k$ be the length of $w$. Identify the subset of $\Sigma(n)$ corresponding to $\mathcal{A}_n^\mathbb{Z}\times \mathbb{Z}$ with $\mathcal{A}_n^\mathbb{Z}$ and let $M_0=M\cap \mathcal{A}_n^\Z$. 

We assume that the symbol $0$ occurs in $M_0$; otherwise, relabel the symbols so that $0$ does occur. Let $\mu-1$ be the maximum number of times that any symbol occurs in $w$. We let $p$ denote the cyclic permutation of $\mathcal{A}_n$ mapping $i$ to $i+1 \mod n$, and we extend $p$ to $\mathcal{A}_n^*$ by applying $p$ to the symbols making up a word and concatenating: $p(v_1\cdots v_\ell)=p(v_1)\cdots p(v_\ell).$ Now consider the following function $\sigma_w \colon \mathcal{A}_n \to \mathcal{A}_n^*$ given by
\[
\sigma_w(0)=0^\mu w 0^\mu, \sigma_w(1)=0^\mu p(w) 0^\mu, \dots, \sigma_w(n-1)=0^\mu p^{n-1}(w) 0^\mu.
\]
For a given word $v=v_0v_1\cdots v_k \in \mathcal{A}_n^*$, let $[v]$ denote the cylinder set
\[
[v]=\{\,(a_i)_{i\in \mathbb{Z}}\in \mathcal{A}_n^\mathbb{Z} \, \colon a_0=v_0, a_1=v_1,\dots, a_k=v_k\,\}.
\]
Letting $K_i=[\sigma_w(i)],$ we let $K$ be the clopen set given by $\cup_i K_i$. Now, for $(x_i)=\cdots x_{-1}\,.\,x_0x_1\cdots \in M_0$, let $\sigma_w\left( (x_i)\right)=\cdots \sigma_w(x_{-1})\,.\,\sigma_w(x_0)\sigma_w(x_1)\cdots$ where the `$.$' separates the terms with negative index from those with non-negative index. We can see that $\sigma_w$ maps $M_0$ injectively to its image.  In fact, the time required for a point of $K$ to return to $K$ under the suspension flow is always $2\mu +k$. Moreover, $\sigma_w$ yields a conjugacy of the return map of the flow to $K$ to the return map of the suspension flow to $M_0$.  Thus, $\sigma_w(M_0)$ is a minimal set of $s$, and the suspension flow on $\sigma_w(M_0)$, which forms a minimal set we denote $\sigma_w(M)$, is conjugate with the suspension flow on $M$ after a rescaling of time by a factor of $2\mu +k$. As the sets in the suspension space of the form $I \,.\, s^\ell([w])$, where $I$ is an interval in $\mathbb{R}$, $w\in \mathcal{A}_n^*$ and $\ell \in \mathbb{Z}$ form a basis for $\Sigma(n)$, we see that the union of the homeomorphic copies of $M$ given by the images $\sigma_w(M)$, $w\in \mathcal{A}_n^*$ is a dense subset of $\Sigma(n)$.
\end{proof}
It should be noted that the maps $\sigma_w \colon \mathcal{A}_n \to \mathcal{A}_n^*$ described above extend to the entire suspension $\Sigma(n)$.

Observe that we cannot expect the minimal sets $\sigma_w(M_0)$  to be homeomorphic or conjugate to $M_0$ as illustrated by the simple example of $M_0$ a fixed point, in which case $\sigma_w(M_0)$ is generally a periodic but not fixed orbit. Of course, the suspension of the orbits in these cases will all be homeomorphic with $S^1$.

Among the minimal sets of  $\Sigma(2)$ are those arising from the Sturmian subshifts of $S(2)$; see, e.g., \cite{BG}. We shall also refer to the minimal sets arising from the suspensions of Sturmians as Sturmians. Such minimal sets are further known as Denjoy minimal sets and occur as minimal sets of $C^1$ flows on the torus. The Sturmian minimal sets were classified by Fokkink \cite{F},\cite{BW}. Two Sturmians are homeomorphic if and only if the continued fraction expansions of the numbers giving the frequency of $1$'s in any of their sequences share a common tail in their continued fraction expansion. Thus, there are uncountably many topologically distinct Sturmian minimal sets, which is important for the results of Section \ref{templates}.

\section{Examples and computations}\label{excon}
We illustrate the computation of the knot group for minimal sets with a few basic examples, concentrating on the inductive step, the computation of the group and homomorphism $\pi_1(E_i)\to \pi_1(E_{i+1})$. For simplicity we describe in detail the case where (without loss of generality) $i=1$ and the handlebody $S_1$ is unknotted; the general case is similar, though more complex in practice. For the cases we need in the later sections for the main theorems, analogues of these examples will suffice.

To compute such an example of $\pi_1(E_2)$ we follow an analogue of the Wirtinger presentation of the group of a (classical) knot, which proceeds in terms of the arcs and crossings of a knot diagram representing it. (See any standard text on knot theory, for example, Lickorish \cite{Lick} chapter 11.)

To compute $\pi_1(E_2)$ and the homomorphism $\pi_1(E_1)\to \pi_1(E_2)$, it suffices to consider the central section of $S_2$, a wedge of circles that can be identified with $X_2$ in the expansion. Take a `knot diagram' of this $X_2$, a 2 dimensional picture of $X_2$ lying in the surface $S_1$ in general position, i.e., with no triple or higher crossings. It consists of connected arcs with under/over crossings (we draw an arc continuing at an over crossing, but breaks to give a new arc at an under crossing), together with the wedge point itself. (See the diagrams later in this section.) We consider all arcs stop at the wedge point and become new arcs beyond it. 

The presentation of the fundamental group of the complement of  $X_2$ is then given by generators in correspondence with the arc components, and relations given by the crossings and the wedge point. We take the following convention:  denote oriented arcs by capitals, $A$, $B$, etc., and the elements of the fundamental group given by loops that wind around them once in accordance with the `right handed corkscrew' rule by the corresponding lower case letters $a$, $b$, etc. This is illustrated by the left hand diagram in  Figure 1 below. The other two diagrams in Figure 1 show the relations at a crossing and at the wedge point for $X_2=S^1\vee S^1$ respectively. (The case of more copies of $S^1$ in $X_2$ is analogous, with, for example supposing the wedge of $n$ circles, a relation that can be written $x_1=x_2^\pm \cdots x_{2n}^\pm$, where the $x_i$ are the names of the in- and out-going arcs for each $S^1$, and the sign is taken according to the direction on each arc. 
The relation from the wedgepoint can also be thought of as saying a loop around all the arcs on one side of the wedgepoint is homotopic to the loop moved over the wedge point to go around all the other arcs.)

The homomorphism $\pi_1(E_1)\to \pi_1(E_2)$ is then read off by identifying the generating loops of $\pi_1(E_1)$ as words in the generating loops of $\pi_1(E_2)$. In the case of $S_1$ unknotted, as considered below, this is straightforward. If $S_1$ is itself knotted (and similarly if this process is carried out repeatedly for higher $S_i$, then the generators of $\pi_1(E_1)$ may be more complicated, but the principle of the calculation is the same.

\begin{figure}[!htb]
	\centering
	\includegraphics[width=120mm]{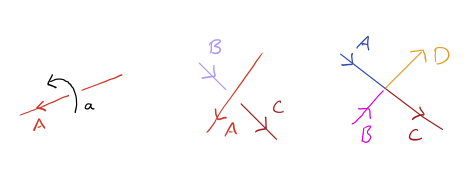}
	\captionsetup{justification=centering}
	\caption{{\sl Generators and relations. In the middle diagram, a crossing, with our convention, gives the relation $a=cab^{-1}$. The right hand diagram, of the wedge point, gives the relation $ab=dc$.}}
\end{figure}

\begin{ex}\label{sol2}{\em 
We begin with the  case of a dyadic solenoid
$$M\ =\ \lim\{\cdots\to S^1\buildrel \times 2\over\longrightarrow S^1\buildrel \times 2\over\longrightarrow S^1\buildrel \times 2\over\longrightarrow S^1\}.$$ 
In Figure 2 we see two different possible embeddings of a regular neighbourhood of the second stage in a regular neighbourhood of the first, in each case the first being a simple solid (unkotted) torus. We denote the orientation of this initial torus by $X$, as shown in the top diagram, and the corresponding generator of $\pi_1(E_1)=\Z$ by $x$. 

\begin{figure}[!htb]
	\centering
	\includegraphics[width=150mm]{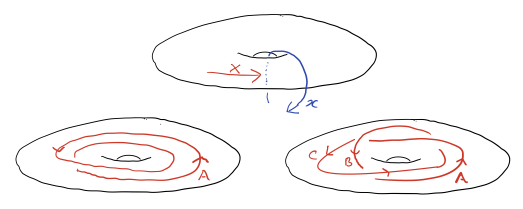}
	\captionsetup{justification=centering}
	\caption{{\sl Two embeddings of stages for the dyadic solenoid.}}
\end{figure}

In the left hand example, the second stage is unknotted, as the first. There is only one arc, called $A$, and so $\pi_1(E_2)=\Z=\langle a\rangle$. (There is one crossing, but it gives only the trivial relation $a=aaa^{-1}$.) The loop $x$ passes around $A$ twice, giving (taking in to account the orientations) $x\mapsto aa$. Iterating this construction, where each handlebody $S_{i+1}$ is unknotted in $S^3$ we obtain the knot group of the unknotted dyadic solenoid simply as
$$\lim\{\Z\buildrel\times 2\over\longrightarrow\Z\buildrel\times 2\over\longrightarrow\Z\to\cdots\}=\Z[{\frac{1}{2}}]\,.$$

In the right hand example, the solid torus $S_2$ inside $S_1$ is knotted, here as a trefoil. While the fundamental group of the complement of $S_1$ is still $\Z=\langle x\rangle$, that of the complement of $S_2$ will be  the knot group of the trefoil. Use of the Wirtinger presentation as described for the figure shown gives
$$\pi_1(E_2)\ =\ \langle a,b,c\,|\, ab=ca=bc\rangle$$
and the image of $x$ is the common word $ab=ca=bc$.

If subsequent solid tori $S_i$ are knotted in non-trivial ways for an infinite number of $i$, then it becomes difficult to say much about the eventual limit group $\lim\pi_1(E_i)$. In Section \ref{surfaceExp} however, we are able to concentrate on cases where for sufficiently large $i$ the $S_i$'s are unknotted within the previous $S_{i-1}$, which makes computation much more feasible.
}\end{ex}

\begin{remark}{\em 
In general, a one dimensional \emph{solenoid} shall mean here any minimal set obtained as the limit of an inverse sequence of circles with orientation-preserving covering maps as bonding maps, and we will only consider the case of solenoids other than $S^1$. 

It is well known that two solenoids $\mS$ and $\mS'$ are homeomorphic if and only if $\check{H}^1(\mS)$ is isomorphic with $\check{H}^1(\mS')$ (Cech cohomology with integer coefficients) as these groups are isomorphic to the respective character groups. Thus, for \emph{any} embeddings of topologically distinct solenoids $\mS$ and $\mS'$, we must have that $S^3\setminus \mS$ is not homeomorphic with  $S^3\setminus \mS'$ as their homology groups  coincide with the respective Cech cohomology groups of the solenoids, by Alexander Duality. It is not immediately clear that this is true for more general minimal sets as $\check{H}^1$ is far from a complete invariant in the general case. 

As can be seen directly or by considering the functoriality of the Alexander Duality previously discussed, we can see that for an \emph{unknotted} solenoid $\mS\hookrightarrow S^3$ the knot group $G(\mS)$ is isomorphic with $\check{H}^1(\mS)$ as abelianisation does not alter the isomorphism type of the fundamental group of the circle. 

Solenoids are the only minimal sets which admit flow expansions with circles as the approximating spaces as any factor map of flows on circles is a covering map. Thus, solenoids are precisely the minimal sets that can be modeled by a sequence of classical knots, and hence their embeddings can be viewed as the `pro-knots.' 

}\end{remark}

\begin{ex}{\em 
Our next example, Figure 3, considers a special Sturmian example given by the closure of the shift orbit in $(\{A,B\}^{\mathbb{Z}},s)$ of the fixed point of the Fibonacci substitution given by $A\mapsto AAB,\ B\mapsto AB$. In both pictures the initial two holed unknotted solid torus $S_1$ is oriented with the positive direction around the holes being anticlockwise; the left hand loop will represent $A$, the right hand $B$ and we denote these $A_1$ and $B_1$ to indicate they are part of the handlebody $S_1$. The corresponding elements of the fundamental group of $E_1$ are $a_1$ and $b_1$, which freely generate. 

The left hand diagram shows an embedded $S_2$ realising the substitution map. It is unknotted in the sense of section \ref{setup}, and in fact can be represented as part of a surface inclusion, as discussed in greater depth in Section \ref{surfaceExp}. It has arcs in the diagram denoted $A_2$, $A_2'$ and $B_2$, with corresponding group elements  $a_2$, $a_2'$ and $b_2$. There is one crossing plus the wedge point. The latter gives the relation $a_2'b=a_2b$ and so $a_2=a_2'$. The crossing now gives the trivial relation. We find that $\pi_1(E_2)$ is the free group $\langle a_2,b_2\rangle$ and the homomorphism induced by the inclusion $E_1\to E_2$ acts as $a_1\mapsto a_2a_2b_2$ and $b_1\mapsto a_2b_2$, i.e., the substitution again, an isomorphism. 

\begin{figure}[!htb]
	\centering
	\includegraphics[width=160mm]{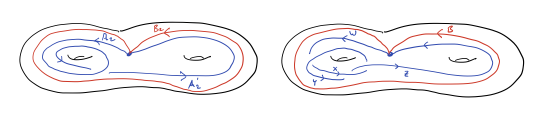}
	\captionsetup{justification=centering}
	\caption{{\sl Two embeddings of stages for the Fibonacci minimal set.}}
\end{figure}

Many forms of complexity could be introduced with more complicated embeddings. In the right hand diagram we  embed $S_2$ by making  the $A_2$ arc perform a trefoil around the left hand hole, as in the second version of the solenoid example; more complex results again could be obtained by, say, making the $A_2$ and $B_2$ arcs link non-trivially as well. For the purpose of illustration though, the right hand diagram, with the arc notation used there, yields
$$\pi_1(E_2)\ =\ \langle w,x,y,x,b\,|\,yw=xy=zx=yz\rangle$$
and the image of $\pi_1(E_1)$ being generated by $yw$ and $zb$, which is an inclusion, but not an isomorphism.
}\end{ex}

\begin{remark}{\em
We note that on passing to homology, hence the abelianisation of these groups, there is exactly one (free abelian) generator for each complete loop in $S_i$: a crossing relation $a=bac^{-1}$ after abelianising gives $b=c$. The image of the resulting homomorphism $H_1(E_i)\to H_1(E_{i+1})$ is then the (transpose) of the transition matrix associated to the substitution $\pi_1(S_{i+1})\to \pi_1(S_i)$, giving a tangible demonstration of the Alexander duality noted before.
}\end{remark}

\begin{ex}\label{TMsimp}{\em 
Our final illustration is one based on the Thue-Morse substitution $A\mapsto AB$, $B\mapsto BA$. Figure 4 illustrates the inclusion of an $S_2$ in the two holed solid torus $S_1$ which  represents this map, and the two stages $S_1$ and $S_2$ are unknotted in the sense of Section \ref{setup}. The iteration of this inclusion does not give the Thue-Morse minimal set in the limit as the substitution does not force the border. Rather one would need to pass to a collared or properised version, which increases the number of wedge summands needed. This simplified inclusion however illustrates the potential for non-injectivity in the homomorphism of fundamental groups of the complements.

\begin{figure}[!htb]
	\centering
	\includegraphics[width=120mm]{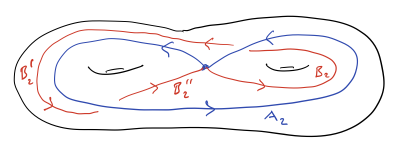}
	\captionsetup{justification=centering}
	\caption{{\sl The simplified Thue-Morse embedding.}}
\end{figure}

The notation is as in the previous examples. There are four arcs in the second stage, two crossings and the wedge point. The latter gives $b_2=b_2''$, while the crossings then both give the relation $b_2a_2=a_2b_2'$. The homomorphism $\pi_1(E_1)\to \pi_1(E_2)$  has the two  generators $a_1, b_1$ take images $a_2b_2'$ and $b_2a_2$, i.e., are equal. Thus the image of the rank 2 group $\pi_1(E_1)$ is rank 1 in $\pi_1(E_2)$, and hence has a kernel.
}\end{ex}

\section{Surface expansions}\label{surfaceExp}

We restrict now to considering minimal sets $M$ in $S^3$ that have an \emph{oriented surface expansion}, Definition \ref{OSE}. Examples in this class, which includes, among others, infinite numbers of each of the Sturmians,  admit a more straightforward description and computation of $G(M)$, and will allow us to prove the main result in Section \ref{templates}.

\begin{defn}\label{OSE}
Suppose $M$ is a minimal set in $S^3$ with  a flow expansion
\[
M \cong \underleftarrow{\lim} \left(X_1 \xleftarrow{f_1} X_2 \xleftarrow{f_2} X_3 
\xleftarrow{f_3} \cdots\right)\,.
\]
We say that this is realised as an \emph{oriented surface expansion} if there is some $N$ such that for all  $i\geqslant N$ there is a sequence of nested oriented  surfaces (with boundary) $\Sigma_i$
\[
\cdots\hookrightarrow\Sigma_i\hookrightarrow\cdots \Sigma_{N+2} \xhookrightarrow{e_{N+1}} \Sigma_{N+1}\xhookrightarrow{e_{N}} \Sigma_{N}\subset S^3 \,,
\]
 where each $\Sigma_i$ is  embedded within the interior of the previous $\Sigma_{i-1}$  with the cross-sectional width of $\Sigma_i$ converging to $0$ as $i\to \infty$, together with a commutative diagram

\[
\xymatrix{
\cdots \,\ar@{^{(}->}[r] & \ar[d]\ar@{^{(}->}[r] \Sigma_{i}
&\cdots \ar@{^{(}->}[r]
& \Sigma_{N+2}\ar[d]\ar@{^{(}->}[r]_{e_{N+1}}
& \Sigma_{N+1}\ar@{^{(}->}[r]_{e_{N}}\ar[d]
& \Sigma_{N}
\ar[d]
\\
\cdots\ar@{->>}[r]
& X_i\ar@{->>}[r]
&\cdots\ar@{->>}[r]
& X_{N+2}\ar@{->>}[r]_{\;\;f_{N+1}}
& X_{N+1}\ar@{->>}[r]_{\;\;f_{N}}
&X_{N}}
\]
\noindent in which the vertical arrows are deformation retractions, and  $M=\cap_i  \Sigma_i$. 

\end{defn}

We also require our surfaces $\Sigma_i$ to be each embedded in the next, and in $S^3$, in a reasonably tame way, so we shall assume from now on that  each of the $\Sigma_{i+1}\to\Sigma_i$ and $\Sigma_i\to S^3$ is PL.

\begin{defn}
    We say a minimal set $M$ in $S^3$ has a \emph{surface embedding} if it is ambiently isotopic to a copy of $M$ that lies on a surface in $S^3$. 
\end{defn}

If $M$ has an oriented surface expansion, then is it isotopic to a set lying on an oriented surface in $S^3$ by virtue of the inclusion $M=\cap \Sigma_i\subset\Sigma_N$. The converse does not seem likely to hold except under further conditions. 

\begin{remark}\label{2to3}{\em 
Given an oriented surface expansion as above, it can be extended to a handlebody presentation as in section \ref{setup} by taking the local cartesian product of each $\Sigma_i$ with a small interval, with the interval for $\Sigma_{i+1}$ embedded as, say, the middle third of the interval for $\Sigma_i$.
}\end{remark}

\begin{defn}
    A minimal set $M$ in $S^3$ has \emph{bounded} oriented surface expansion if it has an oriented surface expansion in which the number of $S^1$'s used in the wedges $X_i$ is bounded for all $i$.
\end{defn}

Note that by telescoping, any bounded expansion $M=\lim X_i$ may be considered an expansion in which all the $X_i$ are wedges of some constant number, $r$ say, of circles. We shall call such an expansion a \emph{constant rank} expansion.

\begin{prop}
The Sturmians all have flow expansions that can be realised as oriented surface expansions.
\end{prop}

\begin{proof}
    These are the minimal sets which have flow expansions  
    $$M \cong \underleftarrow{\lim} \left(X_1 \xleftarrow{f_1} X_2 \xleftarrow{f_2} X_3 
\xleftarrow{f_3} \cdots\right)$$
where each $X_i$ is the wedge of two circles, whose two arcs away from the wedge point we shall denote $0$ and $1$. If we use the properised version (see \cite{CHcom} section 3.1), the maps $f_i\colon X_{i+1}\to X_i$ are all of the form 
$$\sigma_{n_i}\colon 0\mapsto 0\buildrel (n_i+1)\over\ldots 01\qquad\mbox{and}\qquad
1\mapsto 0\buildrel (n_i)\over\ldots 01$$
for some sequence of positive integers $n_i$. 

Let $R$ be the closed surface with boundary given by removing an open disc from a 2-torus. It is depicted in the left hand diagram of Figure 5.
\begin{figure}
	\centering
	\includegraphics[width=160mm]{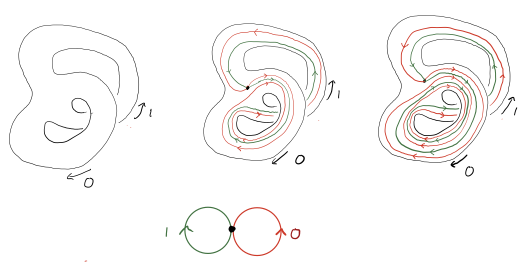}
	\captionsetup{justification=centering}
	\caption{{\sl Embedding of $R$ in $R$ according to the cases $n_i=1$ (centre) and $n_i=2$ (right).}}
\end{figure}
Then $R$ deformation retracts to $X_i=S^1\vee S^1$. It suffices to show that for each $n_i$ we can embed $R$ in itself, realising the homomorphism $\sigma_{n_i}$ in $\pi_1(R)$. Figure 5 shows this done for the cases $n_i=1,2$, and the general case is similar: the red track spirals round the $0$ arc $n_i+1$ times before going to the $1$ arc (once), the green track spirals round the $0$ arc $n_i$ times, in between the red tracks before finishing on a loop of the $1$ arc.
\end{proof}

\begin{remark}{\em
  As the Sturmian homomorphisms $\sigma_{n_i}$ all induce isomorphisms of $F_2$, the free group on two letters, a simple application of Theorem \ref{unknotfreelimit} shows that the knot group of \emph{any} unknotted Sturmian minimal set in $S^3$ is $F_2$, so the knot group does not distinguish between distinct unknotted Sturmians. They can of course easily be distinguished using some analogue of the homology core \cite{CHcore}, which remembers the slope of the Sturmian modulo $Gl(n,\Z)$, as in Fokkink's results \cite{F},\cite{BW}.
}\end{remark}

Recall the classification theorem of closed oriented surfaces with boundary: each such surface is classified by the number of boundary components, $b$ say, and the genus for the resulting surface without boundary given by gluing in $b$ discs to the boundary components. We say such a surface to be \emph{of type} $(b,g)$. Thus, for example, $(2,1)$ is the regular torus with two open discs removed, while $(1,0)$ is the 2-sphere with one open disc removed, i.e., is a closed disc. Moreover, so long as $b>0$, all such surfaces are homotopy equivalent to a wedge of some finite (possibly zero) number of circles. Elementary calculation computes the homology of these objects.

\begin{lemma}
    Let $\Sigma$ be a surface of  type $(b,g)$, with $b\geqslant 1$. Then its first homology group $H_1(\Sigma;\Z)$  is free abelian of rank $2g+b-1$, which is also the number of circles in the wedge to which $\Sigma$ is homotopy equivalent. Hence, for any given rank $r$, there are only a finite number of surfaces which have this as the rank of their $H_1$.
\end{lemma}

\begin{prop}
Suppose $M$ is a minimal set in $S^3$ with a bounded oriented surface expansion. Then there is an oriented surface expansion $(\Sigma_i,e_i)$ such that, for all $n$ sufficiently large, the surfaces $\Sigma_i$ are all of the same $(b,g)$ type.
\end{prop}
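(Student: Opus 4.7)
The plan is to reduce to the case of constant rank, then exploit the classification of oriented surfaces with boundary together with an Euler characteristic constraint, and finally pass to a subsequence via pigeonhole.

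First, since $M$ admits a bounded oriented surface expansion, I would telescope as in the remark preceding the proposition to obtain a constant rank expansion: an oriented surface expansion $(\Sigma_i,e_i)$ in which every $X_i$ is a wedge of exactly $r$ circles for some fixed $r$. Telescoping preserves the defining properties of an oriented surface expansion because nested inclusions compose to inclusions, composites of positive maps of wedges of circles are positive maps, the commutative diagram simply collapses to its sub-diagram on the selected indices, and the intersection $\bigcap_j\Sigma_{i_j}$ still equals $M$.

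Next, note that each $\Sigma_i$ is a compact, connected (since it deformation retracts onto the connected space $X_i$), oriented surface with non-empty boundary (by the definition of an oriented surface expansion), and it is homotopy equivalent to $X_i=\bigvee_r S^1$. Hence its Euler characteristic is $\chi(\Sigma_i)=1-r$. On the other hand, for a compact connected oriented surface of type $(b,g)$ with $b\geqslant 1$ we have $\chi=2-2g-b$. Equating gives the constraint
\[
2g_i+b_i\ =\ r+1,\qquad b_i\geqslant 1,\ g_i\geqslant 0,
\]
which admits only finitely many solutions $(b_i,g_i)$, namely at most $\lfloor r/2\rfloor+1$ pairs.

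By the pigeonhole principle, among the infinitely many surfaces $\Sigma_i$ (for $i\geqslant N$) there must be some type $(b,g)$ that is realised by $\Sigma_i$ for infinitely many indices $i=i_1<i_2<\cdots$. Telescoping the expansion to this subsequence produces a new oriented surface expansion $(\Sigma_{i_j},\tilde e_j)$ with the same limit $M$ in which every surface has type $(b,g)$, as required. The only step that needs even modest care is verifying that the telescoped data still satisfies all parts of the definition, and I expect this to be entirely routine: the key point, already used in the reduction to constant rank, is that both positive maps of wedges and PL embeddings of oriented surfaces compose, and that the deformation retracts $\Sigma_{i_j}\to X_{i_j}$ are unchanged by re-indexing. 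Thus no genuine obstacle arises; the mild difficulty is simply keeping track that the telescoping is applied after, not simultaneously with, the Euler characteristic/pigeonhole argument, so that the assumption of constant rank is actually in force when the finiteness count is made.
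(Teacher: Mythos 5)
Your proposal is correct and follows essentially the same route as the paper: telescope to a constant rank expansion, observe that only finitely many $(b,g)$ types are compatible with homotopy equivalence to a wedge of $r$ circles, and telescope again to a type occurring infinitely often. The Euler characteristic computation you give ($2g+b=r+1$) simply makes explicit the finiteness the paper asserts.
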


\begin{proof}
The bounded surface expansion models a bounded expansion $\{X_i\}$ which, as noted above, we may assume to be a constant rank expansion. As there are only a finite number of distinct $(b,g)$-types of surface in the set of $\Sigma_i$'s, at least one of these must occur infinitely often. After potentially further telescoping, we can assume all the $\Sigma_i$ are of this type. 
\end{proof}

\begin{prop} \label{cute}
Suppose $R$ is a compact, connected oriented surface with boundary of some type $(b,g)$. Suppose $f\colon R\to R$ is an embedding, with image contained in the interior of the target $R$. Then the induced homomorphism $f_*\colon \pi_1(R)\to\pi_1(R)$ is an isomorphism.
\end{prop}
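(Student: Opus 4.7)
My plan is to analyse the complement $S := \overline{R \setminus f(R)}$ in $R$ and show that it decomposes as a disjoint union of $b$ annuli, each joining a boundary circle of $\partial R$ to one of $\partial f(R)$. Granted this, collapsing each annulus onto its $\partial f(R)$-boundary defines a deformation retraction of $R$ onto $f(R)$, making the inclusion $f(R) \hookrightarrow R$ a homotopy equivalence; since $f\colon R \to f(R)$ is a homeomorphism by hypothesis, one concludes that $f_*$ is an isomorphism on $\pi_1$.

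The first step is an Euler-characteristic calculation. Since $f$ is a homeomorphism onto its image, $\chi(f(R)) = \chi(R)$; writing $R = f(R) \cup S$ with $f(R) \cap S = \partial f(R)$ a disjoint union of $b$ circles (hence of Euler characteristic zero), additivity of Euler characteristic yields $\chi(S) = 0$. Each component of $S$ is a connected compact oriented surface with boundary, and since $R$ is connected each component must contain at least one circle of $\partial f(R)$; otherwise it would sit in $R$ disjoint from $f(R)$, contradicting connectedness of $R$. Writing $q_i \geqslant 1$ and $p_i \geqslant 0$ for the number of circles of $\partial f(R)$ and $\partial R$ respectively in the $i$-th component of $S$, one has $\sum q_i = \sum p_i = b$.

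The key step, which I expect to be the main obstacle, is to combine $\chi(S) = 0$ with these counts and with the hypothesis that $R$ and $f(R)$ share the same $(b,g)$-type to conclude that each component is of genus zero with $p_i = q_i = 1$, i.e., an annulus connecting a single circle of $\partial R$ to a single circle of $\partial f(R)$. The Euler-characteristic balance alone admits degenerate configurations, such as a disk component of $S$ (corresponding to a null-homotopic boundary circle of $f(R)$ in $R$) offset by a pair-of-pants component; ruling these out requires the finer input that $R$ and $f(R)$ have matching topological type. A natural strategy is to argue that a disk component of $S$ would allow one to cap off the corresponding boundary circle of $f(R)$ by a disk inside $R$, producing an embedded surface of strictly simpler $(b,g)$-type inside $R$, and iterating to contradict the matching types of $R$ and $f(R)$. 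Once this annular decomposition of $S$ is in hand, the deformation retraction described above delivers the isomorphism $f_* \colon \pi_1(R) \to \pi_1(R)$.
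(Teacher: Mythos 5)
Your route is genuinely different from the paper's, and the difference matters. The paper proves this proposition directly, with no analysis of the complement: assuming $f_*$ is not an isomorphism, it cannot be surjective because finitely generated free groups are Hopfian, and the inverse system $\cdots\to R\xrightarrow{f} R\xrightarrow{f} R$ is then argued to contradict the movability of continua embedded in surfaces (\cite{Btext}, \cite{Kras}) via failure of the Mittag--Leffler condition on fundamental groups. The annular decomposition of $\overline{R\setminus f(R)}$ that you want to establish first is, in the paper, Proposition \ref{surfaceprop}, and it is deduced \emph{from} the present proposition (the isomorphism on $\pi_1$ is what makes $j_1$ surjective in the Mayer--Vietoris sequence there). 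So your plan inverts the paper's logical order: you cannot appeal to that later proposition, and the entire burden falls on the step you leave as a sketch.

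That step is a genuine gap, and as formulated it fails rather than being merely unfinished. The data you use --- $\chi(S)=0$, every component of $S$ meets $\partial f(R)$, and $f(R)\cong R$ of the same $(b,g)$ type --- do not exclude the degenerate configurations, and the proposed remedy (cap off a disk component and iterate ``to contradict the matching types'') produces no contradiction. Concretely, let $R$ be an annulus, type $(2,0)$, and let $f$ carry $R$ homeomorphically onto a thin annulus contained in a small disk in the interior of $R$. Then $S$ is a disk together with a pair of pants, all your numerical constraints hold, capping off the disk merely exhibits $f(R)$ sitting inside a disk in $R$ (which contradicts nothing), and $f_*$ is the zero map on $\pi_1(R)\cong\Z$. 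The same phenomenon occurs for every $b\geqslant 2$, e.g.\ by deleting a small disk from an essential subsurface of the same genus. So the annular structure of $S$ --- and with it the conclusion --- cannot be extracted from the hypotheses you invoke without an additional essentiality input (for instance, that no boundary curve of $f(R)$ bounds a disk in $R$, or injectivity of $f_*$, which is the kind of input the paper's shape-theoretic argument is designed to exploit and which is available in its application to surface expansions). Your counting argument does go through when $b=1$, where $S$ is forced to be a single annulus, but for $b\geqslant 2$ the key step of the proposal breaks down on the configuration above.
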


\begin{proof}
    We have $\pi_1(R)=F$, a free group of some rank $q$. Suppose $f_*$ is not an isomorphism. If it is onto then it would be an isomorphism by the co-Hopfian property of free groups, so it cannot be onto. Then form an inverse system 
    $$\cdots\to R\buildrel f\over\longrightarrow R\buildrel f\over\longrightarrow R\to\cdots\to R\,. $$
   The inverse limit of these spaces is a continuum inside the initial surface $R$. However, the inverse system of fundamental groups fails to be Mittag Leffler, and so contradicts the result that  continua embedded in such surfaces are movable \cite{Btext} VII \S7, \cite{Kras}.
\end{proof}

\begin{prop}\label{surfaceprop}
Suppose $Q$ and $R$ are both closed oriented surfaces of the same type $(b,g)$, with $b\geqslant 1$, and that $Q$ is a subsurface of $R$, embedded in its interior $$Q\buildrel e\over\hookrightarrow\mbox{int}(R)\subset R\,.$$ Then the (closure of) the complement of $Q$ in $R$ is a disjoint set of $b$ cylinders, $S^1\times I$.

\end{prop}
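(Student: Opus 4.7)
The plan is to combine an Euler characteristic calculation with Proposition \ref{cute} to classify the connected components of $C := \overline{R \setminus Q}$. First, $R = Q \cup C$ with $Q \cap C = \partial Q$, a disjoint union of $b$ circles. Inclusion-exclusion for Euler characteristic gives $\chi(C) = \chi(R) - \chi(Q) = 0$, since $Q$ and $R$ have the same type. I then argue that each component of $C$ is a compact, oriented $2$-manifold with non-empty boundary: a closed-surface component must lie in $\mathrm{int}(R)$ (since it cannot locally contain points of $\partial R$), would then be open in $R$ by invariance of domain, and thus open and closed in the connected $R$, forcing it to equal $R$, contradicting $\partial R \neq \emptyset$.

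The key step is to rule out disc components of $C$. For a connected, compact, oriented surface with non-empty boundary of type $(b',g')$, $\chi = 2 - 2g' - b' \leq 1$, with equality if and only if it is a disc. If $R$ is itself a disc (type $(1,0)$), the proposition reduces to a consequence of the Schoenflies theorem, giving a single annular complement, matching $b = 1$. Otherwise, pick a homeomorphism $\phi\colon R \to Q$ (possible because they have the same type) and apply Proposition \ref{cute} to the self-embedding $e \circ \phi \colon R \hookrightarrow R$; one concludes that $e_*\colon \pi_1(Q) \to \pi_1(R)$ is an isomorphism. Each boundary circle $\gamma$ of $Q$ is essential in $Q$: if $\gamma$ were null-homotopic it would bound an embedded disc in $Q$, and since $\gamma \subset \partial Q$, connectedness of $Q$ would force $Q$ to be this disc, contrary to assumption. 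Via $e_*$, $\gamma$ is essential in $R$ too, so bounds no disc there. The identical argument, applied directly in $R$, rules out disc components of $C$ whose boundary lies in $\partial R$.

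With no disc components, every component of $C$ has $\chi \leq 0$; since these sum to $0$, every component has $\chi = 0$. The classification of compact oriented surfaces with boundary then forces each such component to be a cylinder $S^1 \times I$, as the equation $2g' + b' = 2$ with $b' \geq 1$ has only $(g',b') = (0,2)$ as solution, the torus case being excluded by the non-empty boundary requirement. A boundary-circle count finishes the argument: $2 \cdot (\text{number of components}) = |\partial C| = |\partial Q \sqcup \partial R| = 2b$, so $C$ consists of exactly $b$ cylinders, as required.

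The main obstacle I anticipate is the appeal to Proposition \ref{cute}: as stated it covers only self-embeddings of a single surface, so one must precompose the given inclusion $e\colon Q \hookrightarrow R$ with a homeomorphism $R \cong Q$ to put it in the required form. This is routine but should be flagged explicitly, as should the need to handle the disc case ($R$ a disc) separately, since there $\pi_1(R)$ is trivial and the fundamental-group argument carries no information; fortunately Schoenflies dispatches it at once.
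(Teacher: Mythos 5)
Your proof is correct, and it shares the paper's one essential input: Proposition \ref{cute}, applied after precomposing $e$ with a homeomorphism $R\cong Q$, gives that $e_*\colon\pi_1(Q)\to\pi_1(R)$ is an isomorphism, and together with the essentiality of boundary circles in a non-disc surface this rules out disc components of the complement. Where you genuinely diverge is in the bookkeeping. The paper runs the Mayer--Vietoris sequence of $R=Q\cup Z$: surjectivity of $H_1(Q)\to H_1(R)$ kills the connecting map, rank counting gives $H_1(Z)\cong H_0(Z)\cong\Z^b$, Lemma \ref{cpts} (injectivity of $i_0$) shows each component of $Z$ contains exactly one circle of $\partial Q$, and each component is then forced to be an annulus with one boundary circle on $\partial Q$ and one on $\partial R$. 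You instead use additivity of the Euler characteristic, $\chi(C)=\chi(R)-\chi(Q)=0$, exclude closed and disc components (for discs you must, and do, treat both the $\partial Q$ and the $\partial R$ side), deduce that every component is an annulus since each has $\chi\leqslant 0$ and the total is $0$, and count the $2b$ boundary circles to get exactly $b$ annuli. Your route is more elementary (no exact sequences), and you are more careful than the paper on two small points: you flag the precomposition needed to invoke Proposition \ref{cute}, and you handle the disc type $(1,0)$ separately via Schoenflies, where the essential-boundary argument gives nothing. What your count does not immediately deliver is the refinement obtained inside the paper's proof and used afterwards (Corollaries \ref{Isoext} and \ref{isocor}), namely that each annulus joins a boundary circle of $Q$ to one of $R$; a priori your counting alone would allow an annulus with both boundary circles on $\partial Q$, so substituting your argument would require a short supplement (for instance the paper's Lemma \ref{cpts}). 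Finally, the point-set facts you use implicitly, such as $\overline{R\setminus Q}\cap Q=\partial Q$ and that $C$ is a compact subsurface with $\partial C=\partial Q\sqcup\partial R$, are covered by the paper's standing PL assumption on the embeddings, which is worth citing explicitly.
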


\begin{proof}
 Let $Z$ be the closure of the complement $R\setminus Q$, and $B=Q\cap Z$. Then $B$ consists of $b$ disjoint copies of $S^1$. Consider the Mayer-Vietoris exact sequence of the decomposition $R=Q\cup Z$, noting $H_2$ of each of the spaces concerned is trivial.
$$0\to H_1(B)\buildrel i_1\over \to H_1(Q)\oplus H_1(Z)\buildrel j_1\over \to H_1(R)\buildrel \Delta\over \longrightarrow H_0(B)\buildrel i_0\over \to H_0(Q)\oplus H_0(Z)\buildrel j_0\over \to H_0(R)\to 0\,.$$
We know $Q$ and $R$ are connected, and $B$ has $b$ components. Moreover, each of $Q$ and $R$ have the homotopy type of the wedge of $q=2g+b-1$ circles. So the sequence becomes 
$$0\to \Z^b\buildrel i_1\over \longrightarrow \Z^q\oplus H_1(Z)\buildrel j_1\over \longrightarrow \Z^q\buildrel \Delta\over \longrightarrow \Z^b\buildrel i_0\over \longrightarrow \Z\oplus H_0(Z)\buildrel j_0\over \longrightarrow  \Z\to 0\,.$$
By Proposition \ref{cute}, the homomorphism $e_*\colon\pi_1(Q)\to \pi_1(R) $ is an isomorphism, and hence $e$ will also induce an isomorphism in $H_1$. Thus $j_1$ is onto and so $\Delta$ is the zero map. Counting ranks we obtain $H_1(Z)=\Z^b$. Similarly, $H_0(Z)=\Z^b$. Thus $Z$ has $b$ connected components, and so is the disjoint union of $b$ surfaces, possibly  with boundary; moreover each component of $Z$ must have at least one boundary component since no closed surface without boundary can embed in $R$, a closed surface with non-empty boundary. 

\begin{lemma}\label{cpts}
No two components of $B$ can lie in the same component of $Z$, hence every component of $Z$ contains exactly one component of $B$.
\end{lemma}

\begin{proof}
This follows from the injectivity of the homomorphism $i_0$ in the Mayer Vietoris sequence, and the observation above that  $Z$ has exactly $b$ components.
\end{proof}

Choosing a base point $x_0\in Q$, and paths from $x_0$ to each boundary component of $Q$,  we may consider each such boundary component as an element of $\pi_1(Q,x_0)$ by traversing the path from $x_0$  to that component, running round the boundary once, and then back along the path to $x_0$.
\begin{lemma}
Each boundary component of $Q$ represents a non-trivial element of $\pi_1(Q)$.
\end{lemma}
\begin{proof}
First suppose $Q$ is a surface of type $(b,g)$ with $g>0$. As $Q$ as a $g$-holed torus with $b$ discs removed, and the $g$-holed torus can be constructed as a $4g$-gon with identifications around the edge according to the usual commutator expression,  removing one disc gives the boundary component that the commutator represents as an element of $\pi_1$. Each subsequent disc removed increases a loop in the resulting wedge of circles, and so represents a new (non-trivial) element of the fundamental group. The case of $g=0$, that is the 2-sphere with $b$ holes removed, follows by direct calculation. (Note that if $g=0$ then $b\geqslant 2$ since $Q$ is not simply a disc.)
\end{proof}

Let $\beta$ be a single boundary component of $B$, and $P_\beta$ its path component in $Z$. We know that $P_\beta$ is a closed surface with at least one boundary component.

\begin{lemma}
Each $P_\beta$ is homeomorphic to an annulus, i.e., a surface of type $(2,0)$, with one boundary component a boundary component in $R$ and the other boundary component a boundary component of $Q$.
\end{lemma}

\begin{proof}
Consider $\beta$ as an element of $\pi_1(Q)$ as above. As $e_*\colon\pi_1(Q)\to\pi_1(R)$ is an isomorphism, $P_\beta$ cannot be a disc, a surface of type $(1,0)$, hence it is a surface of type $(b',g')$ with either $b'>1$ or $g'>0$. Thus as $H_1(P_\beta)=\Z^{2g'+b'-1}$ this homology group is of rank at least 1. As $Z$ has $b$ components, and $H_1(Z)=\oplus_\beta H_1(P_\beta)=\Z^b$, we must have $H_1(P_\beta)=\Z$, i.e., each $P_\beta$ is of type $(2,0)$, an annulus. By Lemma \ref{cpts}, each $P_\beta$ has one boundary circle in the boundary of $Q$, and the other must then be a boundary circle of $R$.
\end{proof}

This completes the proof of Proposition \ref{surfaceprop}.
\end{proof}

\begin{cor}
    If $Q\subset R$ are closed surfaces as in Proposition \ref{surfaceprop}, then there is a deformation retract of $R$ onto $Q$, i.e., there is a homotopy $H\colon R\times I\to R$ such that $H(r,1)=r$ and $H(r,0)\in Q$ for all $r\in R$,  $H(q,t)=q$ for all $q\in Q$ and $t\in I$
\end{cor}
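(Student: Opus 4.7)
The plan is to use the decomposition provided by Proposition \ref{surfaceprop} directly. That result tells us that $R$ is the union of $Q$ with $b$ disjoint annuli $P_{\beta_1},\ldots,P_{\beta_b}$, where each annulus $P_\beta$ meets $Q$ exactly along the single boundary circle $\beta\subset\partial Q$, with its other boundary circle lying in $\partial R$. The strategy is to slide each annulus onto its inner boundary circle in $\partial Q$ while leaving $Q$ pointwise fixed, and then paste these homotopies to a global homotopy on $R$.

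First, I would fix for each $\beta$ a PL homeomorphism $\phi_\beta\colon S^1\times[0,1]\to P_\beta$ such that $\phi_\beta(S^1\times\{0\})=\beta$ (the inner circle, shared with $\partial Q$) and $\phi_\beta(S^1\times\{1\})$ is the corresponding component of $\partial R$. Such a parameterisation exists because $P_\beta$ is of type $(2,0)$, and in the PL setting (assumed earlier) the boundary structure is respected. Next, define on each $P_\beta$ the straight-line homotopy
\[
H_\beta\bigl(\phi_\beta(\theta,s),\,t\bigr)\ =\ \phi_\beta(\theta,\,st),\qquad (\theta,s)\in S^1\times[0,1],\ t\in[0,1].
\]
At $t=1$ this is the identity on $P_\beta$; at $t=0$ every point of $P_\beta$ is sent to $\phi_\beta(\theta,0)\in\beta\subset Q$; and for any $t$, points on $\beta$ (i.e.\ $s=0$) are fixed.

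I would then define $H\colon R\times I\to R$ by
\[
H(r,t)\ =\ \begin{cases} r & \text{if } r\in Q,\\ H_\beta(r,t) & \text{if } r\in P_\beta.\end{cases}
\]
On the overlap $Q\cap P_\beta=\beta$, both definitions agree (namely, they fix $\beta$ pointwise), so by the pasting lemma $H$ is continuous on $R\times I$. By construction $H(r,1)=r$ for all $r\in R$; $H(r,0)\in Q$ for all $r\in R$ (since each annulus is collapsed onto $\beta\subset Q$); and $H(q,t)=q$ for all $q\in Q$ and $t\in I$. This is exactly the required deformation retract.

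The content is essentially a bookkeeping exercise once Proposition \ref{surfaceprop} has identified the complementary components as genuine annuli meeting $Q$ along single circles; the only mild point to verify is continuity at the seam $\beta$, which the pasting lemma handles since the two definitions coincide there for every $t$. No step is a serious obstacle beyond the PL identification of each $P_\beta$ with $S^1\times[0,1]$, which is standard for a type $(2,0)$ surface.
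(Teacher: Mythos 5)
Your argument is correct and is essentially the paper's own proof: the paper likewise cites Proposition \ref{surfaceprop} and collapses each complementary annulus via the homotopy $H((x,s),t)=(x,st)$, with your version merely spelling out the parameterisation and the pasting along each boundary circle $\beta$.
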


\begin{proof}
    This follows from Proposition \ref{surfaceprop} and the deformation of $S^1\times I$ onto $S^1=\{(x,0)\in S^1\times I$ by $H((x,s),t)=(x,st)$.
\end{proof}

By extending this deformation over a small annulus neighbourhood in $Q$ of each boundary component, we obtain

\begin{cor}\label{Isoext}
    There is an isotopy of $R$ onto $Q$, i.e., a map $L\colon R\times I\to R$ such that $L(-, 1)$ is the identity on $R$, $L(-,0)$ is a homeomorphism $R\to Q$, and each $L(-,t)$ is a homeomorphism onto its image.
\end{cor}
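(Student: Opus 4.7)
The plan is to upgrade the deformation retract $H$ from the preceding corollary---which collapses each annular component of $R\setminus \mbox{int}(Q)$ onto the corresponding boundary circle of $Q$---into an isotopy by re-absorbing each such collapse into a collar of $\partial Q$ taken inside $Q$, so that what was a collapse becomes a homeomorphism onto a smaller but still homeomorphic annulus. The structural input from Proposition \ref{surfaceprop} is that the closure of $R\setminus Q$ is a disjoint union of annuli $P_\beta$, one for each boundary circle $\beta$ of $Q$, where one boundary component of $P_\beta$ equals $\beta$ and the other equals a boundary circle of $R$.

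For each $\beta$, choose a closed collar $C_\beta\subset Q$ of $\beta$, with the $C_\beta$ pairwise disjoint. Form the enlarged annulus $A_\beta := P_\beta\cup C_\beta$ and parametrise it as $S^1\times[-1,\varepsilon]$, with $S^1\times\{-1\}$ the boundary component of $R$ bordering $P_\beta$, $S^1\times\{0\}=\beta$, and $S^1\times\{\varepsilon\}$ the inner boundary of $C_\beta$. Define a one-parameter family $\psi_t\colon[-1,\varepsilon]\to[-1,\varepsilon]$, $t\in I$, as the unique continuous map sending $-1\mapsto -t$, $0\mapsto 0$ and $\varepsilon\mapsto\varepsilon$, linear on each of the subintervals $[-1,0]$ and $[0,\varepsilon]$. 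Then $\psi_1$ is the identity, $\psi_0$ is a homeomorphism from $[-1,\varepsilon]$ onto $[0,\varepsilon]$, and each $\psi_t$ is a homeomorphism of $[-1,\varepsilon]$ onto $[-t,\varepsilon]$.

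Finally, set $L((x,s),t)=(x,\psi_t(s))$ on each $A_\beta=S^1\times[-1,\varepsilon]$, and $L(q,t)=q$ for $q\in Q\setminus\bigcup_\beta \mbox{int}(C_\beta)$. Since $\psi_t(\varepsilon)=\varepsilon$ for every $t$, the two formulas agree on $S^1\times\{\varepsilon\}$, so $L$ is well-defined and continuous. By construction $L(-,1)$ is the identity on $R$, while $L(-,0)$ carries each $A_\beta$ homeomorphically onto $C_\beta$ and fixes the rest of $Q$, giving a homeomorphism $R\to Q$. For each intermediate $t$, $L(-,t)$ is a bijection of $R$ onto the subset $\bigl(Q\setminus\bigcup_\beta \mbox{int}(C_\beta)\bigr)\cup\bigcup_\beta\bigl(S^1\times[-t,\varepsilon]\bigr)$ of $R$, continuous with continuous inverse, hence a homeomorphism onto its image. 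There is no genuine obstacle: the construction is essentially explicit once Proposition \ref{surfaceprop} provides the annular structure, and the only point requiring care is the continuous matching across the inner boundary of each collar, which is built into the requirement $\psi_t(\varepsilon)=\varepsilon$.
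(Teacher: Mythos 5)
Your overall strategy---absorbing the collapse of each annulus $P_\beta$ into a collar $C_\beta\subset Q$ of the corresponding boundary circle of $Q$---is exactly the paper's (its proof is the single remark that the deformation retraction of the preceding corollary is extended over a small annular neighbourhood in $Q$ of each boundary component). However, the specific family $\psi_t$ you wrote down does not achieve this. Because you insist $\psi_t(0)=0$ for \emph{all} $t$ and take $\psi_t$ linear on $[-1,0]$, your formula is $\psi_t(s)=ts$ on $[-1,0]$ and $\psi_t(s)=s$ on $[0,\varepsilon]$; at $t=0$ the whole subinterval $[-1,0]$ is crushed to the point $0$. So $\psi_0$ is \emph{not} a homeomorphism of $[-1,\varepsilon]$ onto $[0,\varepsilon]$, contrary to your claim, and consequently $L(-,0)$ collapses each $P_\beta$ onto the circle $\beta$ while fixing $Q$ pointwise: it is just the deformation retraction again, not a homeomorphism $R\to Q$, which is precisely the content the corollary is supposed to add. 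Note also that with your $\psi_t$ the collar is never actually used---the map is the identity on $S^1\times[0,\varepsilon]$ for every $t$---so introducing $C_\beta$ buys nothing as written.

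The repair is straightforward and is exactly the point where the collar must genuinely enter: do not pin the breakpoint at $0$, but let its image move into the collar as $t\to 0$. For instance, take $\psi_t$ piecewise linear with $\psi_t(-1)=-t$, $\psi_t(0)=(1-t)\varepsilon/2$, $\psi_t(\varepsilon)=\varepsilon$, or more simply $\psi_t=t\,\mathrm{id}+(1-t)\varphi$, where $\varphi\colon[-1,\varepsilon]\to[0,\varepsilon]$ is any increasing homeomorphism fixing $\varepsilon$. Each such $\psi_t$ is strictly increasing, hence a homeomorphism onto $[-t,\varepsilon]$, with $\psi_1=\mathrm{id}$ and $\psi_0$ a homeomorphism onto $[0,\varepsilon]$; since every $\psi_t$ still fixes $\varepsilon$, your gluing with the identity on $Q\setminus\bigcup_\beta \mathrm{int}(C_\beta)$ goes through unchanged and yields an $L$ for which $L(-,0)$ really is a homeomorphism $R\to Q$. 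With this correction your argument coincides with the paper's intended one.
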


\begin{cor}\label{isocor}
Suppose $M\subset S^3$ is a minimal set which has an constant oriented surface expansion $\{(\Sigma_i,e_i)\}$. Suppose the handlebodies $S_i$ are the thickenings of the $\Sigma_i$ as in Remark \ref{2to3}. Denote by $E$ the complement of $M$ in $S^3$ and by $E_i$ the complement of $S_i$, Then $G(M)$,  the knot group of $M$, is isomorphic to $\pi_1(E_N)$.
\end{cor}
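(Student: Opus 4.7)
The plan is to prove that for every $i\geqslant N$ the inclusion $\iota\colon E_i\hookrightarrow E_{i+1}$ induces an isomorphism on fundamental groups. Granted this, the directed system $\pi_1(E_N)\to\pi_1(E_{N+1})\to\cdots$ consists of isomorphisms, its colimit stabilises, and $E(M)=\underrightarrow\lim\;\pi_1(E_i)\cong\pi_1(E_N)$ as required.

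To establish this, I would first apply Corollary \ref{Isoext} to the pair $\Sigma_{i+1}\subset\Sigma_i$---which share the same $(b,g)$-type since the expansion is constant---to obtain an isotopy $L\colon\Sigma_i\times I\to\Sigma_i$ with $L(-,1)=\mathrm{id}$ and $L(-,0)\colon\Sigma_i\to\Sigma_{i+1}$ a homeomorphism. The construction underlying Isoext (the cylindrical deformation $(x,s)\mapsto(x,st)$ on each annular component of $\Sigma_i\setminus\Sigma_{i+1}$, extended across a collar in $\Sigma_{i+1}$) can be arranged so that the image sets $\{L(\Sigma_i,t)\}_{t\in I}$ form a monotonically nested family. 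I would then thicken to three dimensions using the local product structure of Remark \ref{2to3}: writing $S_i=\Sigma_i\times[-1,1]$ and $S_{i+1}=\Sigma_{i+1}\times[-\tfrac13,\tfrac13]$ locally, the formula $\tilde L((x,s),t)=(L(x,t),\,s\cdot c(t))$ for a monotone $c\colon I\to I$ with $c(0)=\tfrac13$, $c(1)=1$ defines a PL isotopy $\tilde L\colon S_i\times I\to S_i$ from the identity to a homeomorphism onto $S_{i+1}$, whose nested images satisfy $\tilde L(S_i,0)=S_{i+1}\subseteq \tilde L(S_i,t)\subseteq S_i=\tilde L(S_i,1)$.

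The PL isotopy extension theorem (Hudson) then extends $\tilde L$ to an ambient PL isotopy $H\colon S^3\times I\to S^3$ with $H(-,1)=\mathrm{id}_{S^3}$ and $H(-,0)(S_i)=S_{i+1}$, preserving the monotone nesting of $H(S_i,t)$. Taking complements in $S^3$, this nesting gives $H(E_i,t)\subseteq E_{i+1}$ for every $t\in I$, so $H$ restricts to a genuine homotopy
\[
H|_{E_i\times I}\colon E_i\times I\longrightarrow E_{i+1}
\]
from $H(-,1)|_{E_i}=\iota$ to the homeomorphism $H(-,0)|_{E_i}\colon E_i\to E_{i+1}$. Hence $\iota$ is homotopic inside $E_{i+1}$ to a homeomorphism, so $\iota_*$ is an isomorphism on $\pi_1$, completing the argument.

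The principal technical point, and the step I would expect to require the most attention, is arranging the isotopy from Isoext to be genuinely monotonically nested and then ensuring that the PL isotopy extension preserves this nesting of $\{H(S_i,t)\}$. Both are believable---the cylindrical deformation formula is manifestly monotone, and isotopy extension can be performed with support in any preassigned collar neighbourhood of $\partial S_i$ in $S^3$---but spelling them out cleanly is the only substantive work beyond the formal invocation of the named theorems.
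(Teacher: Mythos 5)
Your argument is correct, but it takes a genuinely different route from the paper's. The paper never leaves the complements: writing $S_i\cong\Sigma_i\times[-1,1]$ and setting $\breve S_i=\Sigma_i\times[-\tfrac13,\tfrac13]$, it uses Proposition \ref{surfaceprop} to see that the closure of $\breve S_i\setminus S_{i+1}$ is a disjoint union of thickened annuli $S^1\times[0,1]\times[-\tfrac13,\tfrac13]$ attached to $\breve E_i$ along their ends, and then writes down the explicit homotopy $K((z,v,t),r)=(z,rv,t)$ collapsing these annuli, so that $E_{i+1}$ deformation retracts onto $\breve E_i\cong E_i$; no ambient isotopy and no isotopy extension theorem are invoked. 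You instead upgrade Corollary \ref{Isoext} to a monotonically nested isotopy of $\Sigma_i$ onto $\Sigma_{i+1}$, thicken it to the solid surfaces, and appeal to PL isotopy extension to get an ambient isotopy of $S^3$ carrying $S_i$ onto $S_{i+1}$ through nested images; restricting to complements shows the inclusion $E_i\hookrightarrow E_{i+1}$ is homotopic inside $E_{i+1}$ to a homeomorphism, hence a $\pi_1$-isomorphism. Both proofs rest on the same geometric input, namely the annular structure of $\Sigma_i\setminus\Sigma_{i+1}$ from Proposition \ref{surfaceprop} (which is what makes Corollary \ref{Isoext} true), so the approaches are cousins; what yours buys is slightly more, since the ambient isotopy delivers the homeomorphisms $E_N\cong E_{N+1}\cong\cdots\cong E$ of Corollary \ref{isorem} in the same stroke, at the cost of heavier machinery (PL isotopy extension, PL approximation of your explicit formulas, and the monotonicity upgrade of \ref{Isoext}, which does need the small argument you sketch since \ref{Isoext} as stated does not assert nested images). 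Two minor remarks: the nesting of $H(S_i,t)$ is automatic once $H$ extends $\tilde L$, because then $H(S_i,t)=\tilde L(S_i,t)$, so the second of your flagged technical worries evaporates; and your identification of $S_{i+1}$ with $\Sigma_{i+1}\times[-\tfrac13,\tfrac13]$ inside $\Sigma_i\times[-1,1]$ is exactly the normalisation of Remark \ref{2to3} that the paper's own proof also assumes, so no new hypothesis is being smuggled in there.
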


\begin{proof}It suffices to show that the induced homomorphism $\pi_1(E_i)\to\pi_1(E_{i+1})$ is an isomorphism for all $i\geqslant N$.

Recall the construction of $S_i$ from $\Sigma_i$. The space $S_i$ is the space of the normal bundle  to $\Sigma_i$ with fibre a small closed interval which we shall parameterise as $[-1,1]$; it needs to be small enough so that $S_i$ is homeomorphic to $\Sigma_i\times [-1,1]$. Within this space, $S_{i+1}$ is the subspace corresponding to points $(s,t)$ where $s\in \Sigma_{i+1}\subset \Sigma_i$ and $t\in [-\frac{1}{3},\frac{1}{3}]$. Let $\breve S_i\subset S_i$ denote the subspace of points $(s,t)$ where $s\in\Sigma_i$ and $t\in [-\frac{1}{3},\frac{1}{3}]$. 

Let $\breve E_i$ be the complement of $\breve S_i$. As $E_i$ and $\breve E_i$ are homeomorphic, they have the same fundamental group. By Proposition \ref{surfaceprop} the (closure of) the subspace $\breve S_i\setminus S_{i+1}$ consists of the disjoint union of locally thickened annuli, which we can parameterise as spaces $S^1\times [0,1]\times [-\frac{1}{3},\frac{1}{3}]$. Here we shall take the middle parameter in $[0,1]$ to be arranged so that the 1 end is on the boundary of $\Sigma_{i+1}$, and the 0 end on the boundary of $\Sigma_i$. The space $E_{i+1}$ can then be considered as the space $\breve E_i$ with these closed, thickened annuli glued on.

We claim that $\breve E_i$ is a homotopy retract of $E_{i+1}$, i.e., there is a homotopy $K\colon E_{i+1}\times I\to E_{i+1}$ such that $K(y,1)=y$ and $K(y,0)\in \breve E_i$ for all $y\in E_{i+1}$,  and $K(x,t)=x$ for all $x\in \breve E_i$ and $t\in I$. To define this, it suffices to define $K$ on points in the disjoint union of thickened annuli $\breve S_i\setminus S_{i+1}$, each of which we have parameterised as $S^1\times [0,1]\times [-\frac{1}{3},\frac{1}{3}]$. Then $K$ on such an annulus is given by $K((z,v,t),r)=(z,rv,t)$, i.e., the homotopy collapses the annulus down to the end that is part of the boundary of $\breve S_i$.

Hence there is a homotopy equivalence between $\breve E_i$ and $E_{i+1}$ and the result follows.

\end{proof}

Again, as in Corollary \ref{Isoext}, this may be extended to homeomorphisms of the spaces concerned
\begin{cor}\label{isorem}
Suppose $M\subset S^3$ is a minimal set which has an constant oriented surface expansion $\{(\Sigma_i,e_i)\}$ and corresponding complements $E_i$ to their thickenings, as before. Then there are homeomorphisms 
     $E_N\cong E_{N+1}\cong E_{N+2}\cong\cdots\cong E$.
\end{cor}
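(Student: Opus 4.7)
The plan is to upgrade the homotopy equivalences established inside the proof of Corollary \ref{isocor} to genuine homeomorphisms by promoting each deformation retraction to an ambient isotopy of $S^3$, and then to obtain $E\cong E_N$ by composing infinitely many such isotopies whose supports shrink down to $M$.

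First, for each $i\geqslant N$ I would construct a homeomorphism $\phi_i\colon E_{i+1}\to E_i$. Corollary \ref{Isoext} yields an isotopy $L\colon\Sigma_i\times I\to\Sigma_i$ whose time-one map is the identity and whose time-zero map is a homeomorphism $\Sigma_i\to\Sigma_{i+1}$. Using the local product structure $\breve S_i\cong\Sigma_i\times[-\tfrac13,\tfrac13]$ from Remark \ref{2to3}, thicken $L$ by the identity in the second factor to an isotopy of $\breve S_i$ carrying $\breve S_i$ onto $S_{i+1}$. Together with the fact that Proposition \ref{surfaceprop} identifies $\breve S_i\setminus S_{i+1}$ with a disjoint union of thickened cylinders, PL ambient isotopy extension then gives an ambient isotopy $H_i$ of $S^3$ supported in an arbitrarily small open neighbourhood $U_i\subset S_i$, with $H_i(-,0)$ a self-homeomorphism of $S^3$ sending $\breve S_i$ onto $S_{i+1}$. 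Consequently $H_i(-,0)^{-1}$ restricts to a homeomorphism $E_{i+1}\to\breve E_i$, and composing with the evident rescaling homeomorphism $\breve E_i\cong E_i$ produces $\phi_i$. Iterating gives $E_i\cong E_N$ for every $i\geqslant N$.

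The main obstacle is the passage to the limit. Because the cross-sectional widths of the $\Sigma_i$ tend to zero and $\cap_i S_i=M$, the supports $U_i\subset S_i$ can be arranged so that any compact $K\subset E=S^3\setminus M$ meets only finitely many of them. The partial compositions $\Phi_k=\phi_N\circ\phi_{N+1}\circ\cdots\circ\phi_k$ then stabilise on every such $K$ and assemble into a well-defined continuous map $\Phi\colon E\to E_N$; a parallel argument applied to the $H_i(-,0)$ supplies a continuous inverse. The delicate point to verify, using the positive distance from $K$ to $M$ together with the shrinking cross-sections, is that no point of $E$ can be pushed onto $M$ in the limit, so that $\Phi$ is a bijection between $E$ and $E_N$. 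Once this local-finiteness bookkeeping is in place, the chain $E_N\cong E_{N+1}\cong\cdots\cong E$ claimed in the statement follows.
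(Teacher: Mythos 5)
The finite-stage part of your argument is sound and is essentially what the paper intends: thicken the isotopy of Corollary \ref{Isoext} in the normal direction of Remark \ref{2to3}, use Proposition \ref{surfaceprop} to see that the motion takes place inside a union of thickened annuli, and apply PL ambient isotopy extension to get a self-homeomorphism of $S^3$ carrying $S_i$ onto $S_{i+1}$; restricting to complements gives $E_N\cong E_{N+1}\cong E_{N+2}\cong\cdots$, which is all the paper itself says (``as in Corollary \ref{Isoext}'') for these stages.

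The gap is in the final homeomorphism $E\cong E_N$, and it is a genuine one, because some argument really is needed there: an increasing union of open subsets of $S^3$, each homeomorphic to $E_N$, need not be homeomorphic to $E_N$ (Whitehead-manifold-type phenomena), so $E=\bigcup_i E_i$ together with $E_i\cong E_N$ does not suffice. Your local-finiteness argument does produce a well-defined, continuous, injective $\Phi\colon E\to E_N$, since every compact subset of $E$ misses all but finitely many $S_i$ and hence all but finitely many supports. But the claim that ``a parallel argument applied to the $H_i(-,0)$ supplies a continuous inverse'' is exactly the step that fails as stated: the inverse direction composes the ambient homeomorphisms $g_i$ that push $S_i$ onto $S_{i+1}$, and a point $z\in E_N$ close to $\partial S_N$ is sent by $g_N$ close to $\partial S_{N+1}$, hence into the support of $g_{N+1}$ (no ambient homeomorphism can move $\partial S_{N+1}$ while being the identity on the exterior, so that support must contain a neighbourhood of $\partial S_{N+1}$), and then close to $\partial S_{N+2}$, and so on. Such orbits meet infinitely many supports, so the shrinking of the supports towards $M$ gives no pointwise stabilisation for the inverse composition; indeed, for careless choices of the extensions (e.g.\ if each $g_i$ compresses the outer collar of its support into the inner part of the next support) the backward orbits of all points sufficiently near $\partial S_N$ are dragged down to $M$, $\Phi$ fails to be onto, and no continuous inverse exists for that particular $\Phi$. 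Surjectivity is therefore a property of how the isotopies are chosen, not a consequence of support-shrinking plus compactness. The repair is to build the coherence into the construction: either impose quantitative control (choose each $g_i$ so that the exterior collar of its support is expanded outwards, as the obvious ``linear'' models do, so every backward orbit eventually escapes all supports), or, more cleanly, use your isotopies to identify each region $\overline{S_i\setminus S_{i+1}}$ with a product $\partial S_i\times I$, stack these identifications compatibly to obtain $S_N\setminus M\cong\partial S_N\times[0,1)$, and then note that $E=\overline{E}_N\cup\bigl(\partial S_N\times[0,1)\bigr)$ is a compact manifold with boundary with an open external collar attached, hence homeomorphic to its interior $E_N$.
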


\begin{cor}
If $M$ is a minimal set in $S^3$ that has a bounded, oriented surface expansion, then its knot group is finitely generated.
\end{cor}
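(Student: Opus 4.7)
The plan is to reduce, via the machinery of this section, to computing the fundamental group of the complement in $S^3$ of a single compact, tame 3-submanifold, and then to invoke the standard fact that such a complement has finitely generated fundamental group.

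First, by the preceding proposition on bounded oriented surface expansions, I may (after telescoping) assume that the given expansion $\{(\Sigma_i,e_i)\}$ is a \emph{constant} oriented surface expansion: every $\Sigma_i$ for $i\geqslant N$ is a compact oriented surface with boundary of one fixed $(b,g)$-type. Corollary \ref{isorem} then yields homeomorphisms $E_N\cong E_{N+1}\cong\cdots\cong E$, where $E = S^3\setminus M$ and each $E_i$ is the complement of the thickening $S_i$ as constructed in Remark \ref{2to3}. In particular $E(M) = \pi_1(E) \cong \pi_1(E_N)$, so it suffices to show that $\pi_1(E_N)$ is finitely generated.

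For this, recall the standing assumption in this section that all embeddings $\Sigma_{i+1}\hookrightarrow\Sigma_i$ and $\Sigma_i\hookrightarrow S^3$ are PL. Hence the thickening $S_N$ of $\Sigma_N$ is a compact PL 3-submanifold of $S^3$ with boundary. Its open complement $E_N$ then deformation retracts, across a collar of $\partial S_N$, onto the compact PL 3-manifold with boundary obtained as the closure of $S^3\setminus S_N$. Any such compact 3-manifold has the homotopy type of a finite CW complex, and consequently its fundamental group is finitely generated.

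I do not anticipate a real obstacle: the substantive content has already been carried out in Corollary \ref{isorem} and in the type-stabilisation result used to pass to a constant expansion. The only point requiring attention is that the PL assumption guarantees $S_N$ is a genuine compact 3-submanifold of $S^3$, so that the classical finite-generation fact for fundamental groups of compact 3-manifolds with boundary is available and can be applied directly.
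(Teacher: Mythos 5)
Your argument is essentially the paper's own: the paper likewise reduces via the constant-type stabilisation and Corollary \ref{isocor}/\ref{isorem} to the identification $E(M)\cong\pi_1(E_N)$, and then concludes because $E_N$ is the complement of a tame compact finite stage. Your only addition is spelling out the deformation retraction of $E_N$ onto the closed complement of $S_N$ and the role of the PL hypothesis, which the paper leaves implicit; the proof is correct.
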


\begin{proof}
By Corollary \ref{isocor} the knot group of $M$ is given by the fundamental group of the complement of some finite stage $E_N$. As $E_N$ is $S^3$ with a finite CW complex removed, the result follows.
\end{proof}

\begin{remark}{\em
The requirement in  Corollary  \ref{isocor} that the minimal set $M$ has an oriented surface expansion is strictly necessary. For example, to take the Sturmian examples considered earlier, it is easy to construct  handlebody presentations $M=\cap_iS_i$ in which the embeddings $S_{i+1}\to S_i$ at each stage braid the images of the $S^1\vee S^1$; we saw an example of this in the right hand diagram of Figure 3. While that maps $S_{i+1}\to S_i$ still induce the same isomorphism in the fundamental group, the resulting homomorphisms $\pi_1(E_i)\to \pi_1(E_{i+1})$ are no longer isomorphisms.
}\end{remark}

\begin{remark}\label{Handel}
{\em 
Handel \cite{H} shows that any one-dimensional minimal set $M$ of a flow in $S^3$ that occurs in isolation is necessarily a surface minimal set; i.e., it occurs as a minimal set of a flow on a surface. This was relevant to the then open Seifert conjecture, for which Schweitzer \cite{S} had constructed a counter-example with an isolated minimal set supporting a $C^1$ flow. In fact, the example constructed by Schweitzer is an unknotted Sturmian minimal set embedded in much the same way as in Figure 5. In general, using Schwietzer's technique it is straightforward to embed an unknotted minimal set of an oriented surface flow as an isolated minimal set of a $C^1$ flow on $S^3$. While the aperiodic, one-dimensional minimal sets on a surface have limited differentiablity, these minimal sets occur intertwined in larger invariant sets of unlimited differentiability as we shall see in Section \ref{templates}. While Kuperberg \cite{K} has settled the Seifert conjecture for smooth flows, it would be interesting to determine whether there is any restriction on the knotting for an isolated one-dimensional minimal set of a $C^1$ flow on $S^3$.
}\end{remark}

\section{Templates and the positive entropy case}\label{templates}

In \cite{BW1},\cite{BW2} Birman and Williams introduced templates to model the embedding of suspensions of shifts in flows in $S^3$. A comprehensive treatment of templates and periodic orbits can be found in \cite{GHS}. The basic Lorenz template, modelled on the Lorenz attractor, is formed as indicated in Figure 6. Such a template supports a semi-flow (an action of $[0,\infty)$) formed by the suspension of the one-sided shift $\left(\{0,1\}^{\mathbb{N}_0},s\right)$. Many orbits do not remain on the template, but the template supports the entire semi-flow of the suspension of the shift. Such templates are quotients of the suspension of horseshoes and similar maps, where the stable manifolds are identified to points. Thus, the semi-flow on the template is not an entirely faithful representation of the flow on an invariant set of a flow in $S^3$, but it is constructed to faithfully represent the knotting and linking of orbits in the invariant set. For example, in the case of a Sturmian minimal set, which has one pair of orbits that approach each other asymptotically in forward time, there will be one pair of orbits which are identified in the template after some point, but the orbits will be nonetheless distinguished in the template as we explain below. All other orbits of a Sturmian are faithfully represented.

 \begin{figure}[!htb]
	\centering
	\includegraphics[width=80mm]{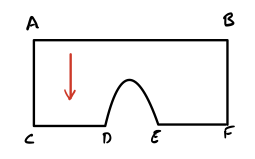}
	\captionsetup{justification=centering}
	\caption{{\sl The basic template. The edge $CD$ is stretched to join $AB$, as is $EF$. The flow lines are vertical with direction as indicated by the red arrow. When a flowline passes into the arch, between $D$ and $E$, it disappears from the template.}}
\end{figure}

 \begin{figure}[!htb]
	\centering
	\includegraphics[width=80mm]{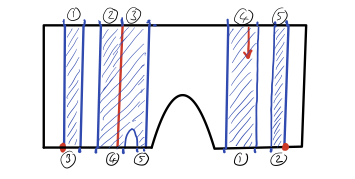}
	\captionsetup{justification=centering}
	\caption{{\sl Neighbourhood scheme of the first stage of the image of the Fibonacci minimal set in the template.}}
\end{figure}

In Figure 7 we see a schematic neighbourhood of the first stage of the image in the template of the minimal set $M_{Fib}$ arising from the Fibonacci substitution $0\mapsto 010$, $1\mapsto 01$. As before, the flow in the template is vertically downwards, and the paths that hit the left section represent 0, while those on the right give 1's. The numbered intervals of the neighbourhoods at the bottom are identified with the corresponding number at the top. (There is a magnification arising from the stretching factor which is suppressed in the diagram.)

This minimal set has two singular orbits, corresponding to the bi-infinite words
$$\begin{array}{rl}
\cdots 01001\!\!\!\!& .\ 010010\cdots\\
\cdots 01010\!\!\!\!& .\ 010010\cdots
\end{array}$$
and in the template the right hand half infinite sections of these two orbits become identified. We denote by $\widetilde{M_{Fib}}$ this image of $M_{Fib}$. In the figure the image of this double orbit from the decimal point onwards is highlighted in red, traveling from the junction of boundaries of the strips marked 2 and 3 at the top, through the 0 arc, to a point in the middle of the interval 4; the path continues from the 4 at the top, describing the 1 arc, and so on. The pre-image of the decimal point at the top is a two-fold branch point: on the bottom of the template the pre-image consists of a point in the neighbourhood strip 2 (describing a 1 arc) and in the strip 3 (describing a 0 arc). 

\begin{aside}
    \emph{For those familiar with the work of Anderson and Putnam \cite{AP} on the spaces associated to substitutions, $\widetilde{M_{Fib}}$ is precisely the space obtained for this substitution using the \emph{uncollared} complex. This is a strict quotient of $M_{Fib}$ since the substitution $0\mapsto 010$, $1\mapsto 01$ does not force the border. In the earlier construction, illustrated in Figure 5, we used a properised version of the substitution so as to ensure that the limit space was the genuine minimal set.}
\end{aside}

Other Sturmian minimal sets have similar associated diagrams, though with more strips involved. The scheme for the substitution $0\mapsto 0\buildrel(r)\over\cdots 010$, $1\mapsto0\buildrel(r)\over\cdots 01$ is illustrated in Figure 8.

 \begin{figure}[!htb]
	\centering
	\includegraphics[width=80mm]{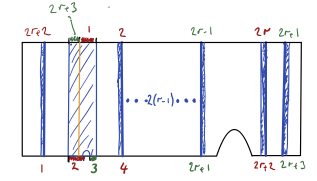}
	\captionsetup{justification=centering}
	\caption{{\sl Diagram for the template representing the first stage for Sturmian map $0\mapsto 0\buildrel(r)\over\cdots 010$, $1\mapsto0\buildrel(r)\over\cdots 01$. The numbers at the top and the bottom indicate where the strips match up; the red represent parts of the loop corresponding to the $0\buildrel(r)\over\cdots 010$ word, the green to the $0\buildrel(r)\over\cdots 01$ word.}}
\end{figure}

\bigskip This establishes the first stage of our Sturmian as having a surface neighbourhood in the template. Subsequent finer neighbourhoods (whether Fibonacci or other Sturmian words) will thus also lie in this surface, embedded analogously. 

 \begin{figure}[!htb]
	\centering
	\includegraphics[width=80mm]{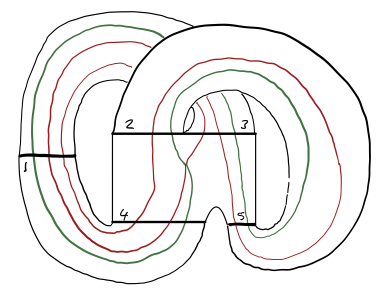}
	\captionsetup{justification=centering}
	\caption{{\sl Embedding of the next stage of the image of the Fibonacci minimal set in the template.}}
\end{figure}

As an example, in Figure 9 we see the strips of neighbourhood shown in Figure 7 glued together according to their numbered joins there. In it is a graph whose thickening gives a neighbourhood of the next stage of the Fibonacci space: the red loop indicates the double substitution of 0, i.e., $0\mapsto 010\mapsto 01001010$, and the green that for 1, namely $1\mapsto 01\mapsto 01001$. As can be seen the pattern of neighbourhoods in the middle section is replicating the overall pattern in the template in Figure 7, reflecting the self-similarity of $M_{Fib}$ under substitution.

\bigskip For any given Sturmian, denote by $E'_1$ the complement of this first stage in $S^3$ and by $E'_n$ the complement of subsequent ones. Then $E'=\cup_nE_n'$ is the complement of the Sturmian minimal set with the two singular orbits identified for the positive half line.

As all the Sturmian words induce isomorphisms in $\pi_1(-)$ of the corresponding wedges of circles, the results of Section \ref{surfaceExp}, in particuar Corollary \ref{isorem}, gives a homeomorphism between $E'_1$ and $E'_n$ for all $n$, and hence between $E_1'$ and $E'$. However, if $E'(r)$ denotes the space $E'$ with the two singular lines identified only from point $r>0$ onwards, (so $E'=E'(0)$), then $E$, the complement of the true minimal set of the Sturmian system, is the limit of the $E'(r)$ as $r\to\infty$. As $E'(r)\cong E'(r+1)$, we obtain

\begin{prop}
    Suppose $M$ is a Sturmian minimal set in $S^3$, and realised via a template, as above. Then the knot type of $M$ is given by the knot type of the first stage of its quotient, i.e., by its corresponding $E_1'$ as just described.
\end{prop}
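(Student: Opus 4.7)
The knot type of $M$ is by definition the homeomorphism type of the complement $E = S^3 \setminus M$, and the task is to produce a homeomorphism $E \cong E'_1$. My plan is to chain three stages of homeomorphisms of the form $E'_1 \cong E'(0) \cong E'(r) \to E$ as $r \to \infty$, with the first two steps following from results already in hand and only the final limit step requiring new work.

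For the first stage $E'_1 \cong E'(0)$, observe that the surface neighbourhood of the first stage $\widetilde{M}_1$ in the template is a two-holed torus (type $(b,g) = (1,1)$), and, as discussed in the passage preceding the statement, each subsequent $\widetilde{M}_n$ lies in that same surface, embedded analogously to Figure 5. This constitutes a constant-type oriented surface expansion of $\widetilde{M}$; since Sturmian substitutions induce isomorphisms on $F_2 = \pi_1$, Corollary~\ref{isorem} applies and delivers $E'_1 \cong E'_n$ for every $n$, and therefore $E'_1 \cong E' = \bigcup_n E'_n = E'(0)$. For the second stage $E'(r) \cong E'(r+s)$, I would apply the time-$s$ map of the flow on $S^3$ directly: it preserves orbits and translates the parameter along them by $s$, so it takes the identified half-ray of the singular leaves based at parameter $r$ onto that based at parameter $r+s$, carrying $\widetilde{M}(r)$ onto $\widetilde{M}(r+s)$ and inducing the required homeomorphism on complements. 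In particular, $E'_1 \cong E'(r)$ for every $r \geq 0$.

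The third stage, deducing $E \cong E'_1$ from the homeomorphisms $E'_1 \cong E'(r)$ valid only for finite $r$, is the main obstacle. My plan is a telescoping exhaustion. Fix an increasing compact exhaustion $E = \bigcup_n K_n$ with $K_n \subset \mathrm{int}(K_{n+1})$, and choose $r_n \nearrow \infty$ large enough that the identified half-ray of $\widetilde{M}(r_n)$ is disjoint from a neighbourhood of $K_n$; then $K_n$ sits tautologically in $E'(r_n)$ because the identifications of $E'(r_n)$ occur outside $K_n$. The flow-isotopy of the second stage realising $E'(r_n) \cong E'(r_{n+1})$ can then be damped outside a tubular neighbourhood of the identified ray (via a smooth cutoff) so as to act as the identity on $K_n$. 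Telescoping these modified homeomorphisms yields a compatible family of embeddings $K_n \hookrightarrow E'_1$ agreeing on overlaps, whose colimit is a well-defined homeomorphism $E \to E'_1$. The hard part will be verifying that the damped isotopies remain homeomorphisms onto their images and that the telescoped sequence converges on all of $E$; both facts should follow from the observation that the identified half-ray escapes any compact set as $r \to \infty$, so the required corrections have supports tending to infinity and cannot accumulate on compacta of $E$.
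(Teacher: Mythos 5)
Your first two stages do track the paper's own argument: the identification $E'_1\cong E'_n\cong E'=E'(0)$ is exactly the paper's appeal to the constant-type surface expansion and Corollary \ref{isorem}, and the shift $E'(r)\cong E'(r+s)$ is asserted (without mechanism) in the paper, with your flow-map suggestion being the natural candidate --- though even there one must realise the partially identified sets $\widetilde{M}(r)$ in $S^3$ flow-equivariantly (they are images of $M$ under a collapse of stable arcs, not flow-invariant subsets), since the time-$s$ map of the ambient flow does not literally carry one partial quotient onto the next for an arbitrary realisation. The paper treats the final passage to $E$ as immediate (``$E$ is the limit of the $E'(r)$''), so your third stage is your own elaboration, and that is where the genuine gaps lie.

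Two concrete problems. First, the ``damped'' homeomorphisms: a smooth cutoff of the flow isotopy is a homeomorphism of $S^3$, but it will not carry $\widetilde{M}(r_n)$ onto $\widetilde{M}(r_{n+1})$. The identified ray is not an orbit of the ambient flow (it lives on the template quotient), points in the transition region are flowed only part of the way, and --- since the singular leaf is dense in the minimal set --- any ``tubular neighbourhood of the identified ray'' in fact meets Cantor-many other strands of $M$, all of which must be mapped exactly onto themselves by the modified map. What you actually need is a homeomorphism supported in a small neighbourhood of the compact region where $\widetilde{M}(r_n)$ and $\widetilde{M}(r_{n+1})$ differ, fixing the ambient Cantor bundle of strands and unmerging/merging the distinguished pair; such a map plausibly exists, but its construction is a separate local argument, not a cutoff of the flow. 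Second, surjectivity: compatible embeddings $K_n\hookrightarrow E'_1$ give, in the colimit, only an injective open map $E\to E'_1$, i.e.\ a homeomorphism of $E$ onto \emph{some} open subset of $E'_1$. The observation that the supports of the corrections leave every compact subset of $E$ controls the domain, but says nothing about the images $e_n(K_n)$ exhausting $E'_1$: the pullbacks $\Psi_n^{-1}(L)$ of a fixed compact $L\subset E'_1$ live in $E'(r_n)$, which contains points squeezed between (and on) the not-yet-identified leaves, and there is no a priori reason these pullbacks are eventually trapped in the exhaustion $K_n$ of $E$. To conclude $E\cong E'_1$ rather than $E\cong$ (open subset of $E'_1$) you need additional control --- e.g.\ a simultaneous exhaustion of $E'_1$ with a back-and-forth scheme, or an appeal to a monotone-union theorem --- and this is precisely the point your sketch (and, to be fair, the paper's one-line limit assertion) leaves open.
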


\begin{cor}
Let $M$ be a given Sturmian minimal set. In the flow represented by the template, there are an infinite number of minimal sets homeomorphic to $M$, each with distinct knot type.
\end{cor}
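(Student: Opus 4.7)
The plan is to combine Theorem \ref{density} with the preceding Proposition. By Theorem \ref{density}, for every finite word $w \in \mathcal{A}_n^*$ (with $n=2$ in the Sturmian case), the construction $\sigma_w$ produces a homeomorphic copy $\sigma_w(M)$ of $M$ inside the suspension $\Sigma(n)$, hence inside the flow modelled by the template. This immediately supplies an infinite (in fact dense) family of homeomorphic copies of $M$. By the preceding Proposition, the knot type of each such copy $\sigma_w(M)$ is determined by the knot type of its first-stage surface neighbourhood $E_1'(w)$ in the template, so the task reduces to exhibiting infinitely many words $w$ for which the corresponding first-stage surfaces are pairwise inequivalent as embeddings in $S^3$.

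For a given word $w$, the copy $\sigma_w(M)$ lives in the cylinder set union $\bigcup_i [\sigma_w(i)]$, and its first stage in the template is a surface neighbourhood threaded through the horizontal strips and the arch according to the pattern $0^\mu w\, 0^\mu,\, 0^\mu p(w)\, 0^\mu,\ldots$ from the proof of Theorem \ref{density}. Because every traversal of the template's arch contributes a definite crossing in the planar diagram of any surface carried on the template, lengthening $w$ forces the first-stage surface of $\sigma_w(M)$ to accumulate more crossings. I would construct an infinite family $\{w_k\}$ of words of increasing length, chosen so that each successive first-stage surface winds strictly more often around the arch than the previous ones. The homomorphisms $\pi_1(E_1'(w_k))$ could then be compared using the Wirtinger-style presentations of Section \ref{excon}, with each extra arch-traversal adding a genuine new relation of the form $a = cac^{-1}$ that cannot be removed by the others.

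The main obstacle is producing a concrete invariant that truly separates infinitely many of these knot types. Alexander duality (Theorem \ref{AlexanderDuality}) forces $H_1(S^3 \setminus \sigma_w(M)) \cong \check H^1(M) \cong \mathbb{Z}^2$ for every $w$, so any distinguishing invariant must come from the nonabelian part of the knot group. The cleanest route is to extract from $E_1'(w_k)$ a specific classical knot realised as a core loop of the first-stage surface, and to arrange the family $\{w_k\}$ so that these core loops range over an infinite family of classical knot types known to have pairwise non-isomorphic knot groups (for instance, torus knots $T(2, 2k+1)$, whose knot groups $\langle a, b \mid a^2 = b^{2k+1}\rangle$ are easily seen to be pairwise distinct). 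Since a subgroup of the knot group serves as an isotopy invariant of the embedded minimal set, this yields infinitely many distinct knot types among the copies $\sigma_{w_k}(M)$. The delicate step is verifying that the template geometry actually permits such an explicit realisation of torus knots by core loops of first-stage surfaces, which requires tracking the arch contributions to each $\sigma_{w_k}$ carefully.
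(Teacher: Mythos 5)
Your opening moves match the paper: Theorem \ref{density} supplies infinitely many homeomorphic copies of $M$ described by longer and longer words, and the preceding Proposition reduces the knot type of each copy to that of its first-stage neighbourhood in the template. But the decisive step --- actually producing infinitely many \emph{distinct} knot types --- is exactly the part you leave unproven. Your plan is to realise an explicit family of classical knots (e.g.\ the torus knots $T(2,2k+1)$) as core loops of the first-stage surfaces and separate them by their knot groups, and you yourself flag the verification that the template geometry permits this as ``the delicate step.'' That step is a genuine gap, and it is not a routine one: the words available to you are not arbitrary template words but the highly constrained padded words $0^\mu p^j(w)0^\mu$ coming out of the $\sigma_w$ construction, with $\mu$ itself dictated by $w$; nothing in your argument shows that the resulting core loops ever realise a prescribed infinite family such as $T(2,2k+1)$. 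Likewise, your heuristic that each extra arch traversal ``adds a genuine new relation that cannot be removed'' is not reliable --- the paper's own solenoid example shows template-style crossings can contribute only trivial relations, and more crossings in a Wirtinger presentation do not by themselves force a new group or a new knot type.

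The paper closes this gap differently and more economically: it invokes the argument of Franks and Williams \cite{FW}, Section 4, that the genus of a Seifert surface of a loop carried by the template is bounded below by a positive polynomial in the template crossing numbers. Since the $0$ loop (say) of the first stage winds around more and more as the describing words lengthen, this genus bound tends to infinity, so only finitely many of the copies can share any given knot type; hence infinitely many distinct knot types occur, with no need to identify which knots arise or to compute any knot groups. This is non-constructive, but it is complete, whereas your route would still require a concrete realisation theorem for the specific word families produced by $\sigma_w$ --- something you have not supplied and which may well fail for the families you can actually write down. If you want an effective version, the paper's remark after the corollary points the honest way: use Corollary \ref{isocor} to compute knot groups of individual examples one stage at a time, accepting that this cannot be guaranteed to detect all distinct classes.
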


\begin{proof}
    The first stage $E_1'$ of any realisation of $M$ in the template is the embedding of a space with homotopy type a wedge of two circles. By Theorem \ref{density} there are an infinite number of possible examples of a given $M$, with different words (of increasing length over the different examples) describing the examples.  The maps $\sigma_v$ will map a Sturmian minimal set to a homeomorph preserving the combinatorics of the return map so that the homeomorph will lie in a surface but with more crossings, depending on the word $v.$ 

    We consider, say, the 0 loop. For each example its image in the template for the first stage will wind increasing numbers of times around. The argument of Franks and Williams \cite{FW} section 4 now applies: the knot type of a loop in the template has Seifert surface whose genus is bounded below by a positive polynomal expression in the crossing numbers arising in the template. Over our different manifestations of $M$, the 0 loop will eventually describe an infinite number of distinct knots (as will the 1 loop). 
\end{proof}

\begin{remark}{\em
While this establishes that for a given Sturmian $M$ there are an infinite number of distinct knot classes, the Franks and Williams argument adapted above only shows that as we increase the length of the 0 loop we will eventually achieve a new class at some point. It does not tell us how long we must wait, or which are the distinct ones, so it is essentially a non-constructive proof. On the other hand, by Corollary \ref{isocor} we also have a direct ability to compute the knot groups of individual examples explicitly by a one stage computation, just as in the examples in section \ref{excon}, and specify distinct copies of $M$. It should be noted though as the knot group of a (classical) knot is not a complete invariant, this cannot be relied on to catch all the distinct classes.
}\end{remark}

\noindent{\em Proof of Theorem \ref{ambitious}.} By the results of \cite{FW}, any $C^2$ flow on $S^3$ with a compact invariant set with positive entropy has an invariant set modelled by the basic template described above. As noted at the end of Section \ref{Suspensions}, there are uncountably many homeomorphism classes among the Sturmian minimal sets. The theorem follows. \hfill$\square$

\end{document}